\documentclass[10pt, a4paper, centertags, openright, mathscr]{amsart}
\usepackage[left]{lineno}

\usepackage{etex}
\usepackage{listings}

\usepackage{amsmath, amsfonts, amsthm, amssymb, amsxtra, bbm, enumerate, stmaryrd}
\usepackage[utf8]{inputenc}
\usepackage[english]{babel}
\usepackage[dvipsnames]{xcolor}

\usepackage[style=alphabetic,
            backend=biber,
            isbn=false,
            doi=false,
            url=false,
            hyperref=true, 
            firstinits=true,
            maxbibnames=99,
            maxalphanames=99,
            maxnames=99,
            block=none]{biblatex}
\addbibresource{\string~/Documents/Latex/bibFile.bib}

\renewbibmacro*{journal+issuetitle}{%
  \usebibmacro{journal}%
  \setunit*{\addspace}%
  \iffieldundef{series}
    {}
    {\newunit
     \printfield{series}%
     \setunit{\addspace}}%
  \printfield{volume}%
  \setunit{\addspace}%
  \usebibmacro{issue+date}%
  \setunit{\addcolon\space}%
  \usebibmacro{issue}%
  \setunit{\addcomma\space}%
  \printfield{number}%
  \newunit}

\renewbibmacro*{volume+number+eid}{%
\printfield{volume}
\printfield{number}
}

\renewbibmacro*{date}{\setunit{\addspace}\printdate}

\DeclareFieldFormat*{title}{\mkbibemph{#1}}
\DeclareFieldFormat*{journaltitle}{#1} 
\DeclareFieldFormat[incollection]{booktitle}{\rm #1} 
\DeclareFieldFormat*{volume}{vol. \bf #1}
\DeclareFieldFormat[article]{date}{#1}
\DeclareFieldFormat[book,inbook,incollection,thesis,unpublished]{date}{(#1)}
\DeclareFieldFormat*{number}{no. #1}
\DeclareFieldFormat*{pages}{#1}
\renewbibmacro{in:}{}

\usepackage{graphicx}
\usepackage[all, cmtip]{xy}
\xyoption{curve}

\usepackage{tikz}
\usetikzlibrary{arrows,calc,through,backgrounds,matrix,decorations.pathmorphing,positioning}

\usepackage{soul}
\usepackage{mathtools}

\usepackage{calligra}
\usepackage[mathcal]{euscript}
\DeclareMathAlphabet{\mathcalligra}{T1}{calligra}{m}{n}
\DeclareMathAlphabet{\mathpzc}{OT1}{pzc}{m}{it}

\usepackage{geometry} \geometry{left=3.2cm,right=3.2cm,top=2cm,bottom=4cm}

\usepackage{stackrel}
\usepackage{rotating}

\usepackage{wasysym}
\usepackage{paralist}
\usepackage{textcomp}
\usepackage[colorinlistoftodos]{todonotes}
\usepackage{caption}

\usepackage{fancyhdr}
\usepackage[bottom]{footmisc}
\usepackage{url}


\usepackage{mathrsfs} 
\usepackage{bm}

\calclayout
\makeatletter
\g@addto@macro{\thm@space@setup}{\thm@headfont{\bf}}
\makeatletter
\makeatother



\newtheorem{lem}{Lemma}[section]
\newtheorem{prop}[lem]{Proposition}
\newtheorem{cor}[lem]{Corollary}
\newtheorem{thm}[lem]{Theorem}

\theoremstyle{remark}

\theoremstyle{definition}

\newtheorem{defn}[lem]{Definition}

\numberwithin{equation}{section}





\DeclareMathOperator{\Hom}{Hom}

\DeclareMathOperator{\op}{op}

\DeclareMathOperator{\Ext}{Ext}
\DeclareMathOperator{\Tor}{Tor}

\DeclareMathOperator{\HH}{HH}

\DeclareMathOperator{\im}{im}




\DeclareMathOperator{\ot}{\otimes}




\newcommand{\Z}{\mathbb{Z}}
\newcommand{\cN}{\mathcal{N}}

\newdir{ >}{{}*!/-5pt/@{>}}
\newdir{> }{{}*!/+5pt/@{>}}
\newdir{ <}{{}*!/-5pt/@{<}}
\newdir{>>> }{{}*!/+10pt/@{>}}

\entrymodifiers={+!!<0pt,\fontdimen22\textfont2>}

\openup1.2\jot
\setlength{\topmargin}{0.1\topmargin}
\setlength{\oddsidemargin}{0.5\oddsidemargin}
\setlength{\evensidemargin}{0.5\oddsidemargin}
\setlength{\textheight}{1.02\textheight}
\setlength{\textwidth}{1.1\textwidth}


\usepackage{dsfont}
\begin{document}

\title{Hochschild cohomology of some quantum complete intersections}
\author{Karin Erdmann, Magnus Hellstr{\o}m-Finnsen}
\address{Karin Erdmann \\ Mathematical Institute \\ University of Oxford \\ OX2 6GG Oxford \\ United Kingdom}
\email{karin.erdmann@maths.ox.ac.uk}
\address{Magnus Hellstr{\o}m-Finnsen\\ Institutt for matematiske fag\\ Norges teknisk-naturvitenskapelige universitet  \\ N-7491 Trondheim\\ Norway}
\email{magnus.hellstrom-finnsen@math.ntnu.no}
\date{\today}
\thanks{}
\keywords{Hochschild Cohomology; Quantum complete intersections.}
\subjclass[2010]{Primary 16E40; Secondary 16U80; 16S80; 81R50.} 
\begin{abstract}
We compute  the Hochschild cohomology ring of the algebras
$A= k\langle X, Y\rangle/
(X^a, XY-qYX, Y^a)$
over a field $k$
where $a\geq 2$ and where $q\in k$ is a primitive $a$-th root of unity. We find the
the dimension of $\HH^n(A)$ and show that 
it is independent of $a$. We compute explicitly the ring structure
of the even part of the Hochschild cohomology modulo homogeneous nilpotent elements.
\end{abstract}

\maketitle


\section{Introduction}\label{sec:intro}

Let $k$ be a field, and let $0\neq q\in k$. Quantum complete intersections originate from work of Manin \cite{man-87}.  Here we focus on the algebras
$$A_q = k\langle X, Y\rangle /(X^a, XY-qYX, Y^a).
$$
Such algebras have provided
several examples giving answers to homological conjectures and questions. 
Perhaps most spectacular amongst these is Happel's question. 
In \cite{hap-89} 
Happel asked whether an algebra whose Hochschild cohomology is finite-dimensional, must have finite global dimension. 
The main result of  \cite{bgms-05} gave a negative answer: It shows that the Hochschild cohomology of the quantum complete intersection $A_q$
as above, when $a=2$ and $q$ not a root of unity, is finite-dimensional. However
the algebra $A_q$ is selfinjective, hence has infinite global dimension.
Already earlier, R.\ Schulz discovered unusual properties for these algebras $A_q$, see \cite{sch-86} and \cite{sch-94}.

Furthermore,  there is a theory of support varieties in terms of Hochschild cohomology provided the algebra
satisfies suitable finite generation properties, known as condition (Fg) (see
\cite{ehsst-04} and \cite{sol-06}).  
For $A_q$, this condition is satisfied precisely when $q$ is a root of unity.
The general theory of these support varieties
has now been well established in several papers. However, in order to actually compute the varieties over a given
algebra, one needs to determine the ring structure of the Hochschild cohomology, or at least modulo homogeneous nilpotent
elements. 

The results in this paper will be a contribution towards this goal.
We determine the ring structure of the even part of $\HH^{2*}(A)$ modulo the ideal of homogeneous nilpotent
elements  for $A_q$ when $q$ is a primitive $a$-th root of unitity.
The proofs are quite technical, but this illustrates the typical difficulties and computations one is faced
with when trying to compute Hochschild cohomology.

First we present an unpublished result by P.\ Bergh and K.\ Erdmann which determines the dimensions of the Hochschild cohomology groups; this is done via  exploiting Hochschild homology. Surprisingly, the answer is independent of $a$ (see Theorem \ref{thm:dim} and Corollary \ref{cor:dim}). This suggests that perhaps also the ring structure might not depend too much on the parameter $a$. We determine explicit bases of the even part $\HH^{2*}(A)$ (see Section \ref{subsec:basis_ageq3}). 

Furthermore, we compute the algebra structure of  $\HH^{2*}(A)$ modulo
the largest homogeneous nilpotent ideal. We show that it is $\Z_2$-graded, with
degree zero part  isomorphic to the polynomial ring in two variables, generated in degree $2$. The explicit description 
is given in  \ref{R:a=2} when $a=2$, and in \ref{subsec:descoflargeyoneda} when $a\geq 3$.
 
An explicit description when $a=2$ was also given 
 in \cite[Section 3.4]{bgms-05}. We include this case (in Section \ref{sec:HHa2}), as it shows that it is part of the general pattern.

%
%
\section{Preliminaries}\label{sec:prelim}

More generally, let $A$ be any finite-dimensional algebra over a field $k$, and let $A^{e}=A \ot_{k} A^{\op}$ denote the \emph{enveloping algebra}.
We view bimodules over $A$ as left  modules over $A^e$. 
In this setting, the  \emph{Hochschild cohomology} of $A$ 
can be taken as  $\HH^n(A)=\Ext^{n}_{A^{e}}(A,A)$,  the $n$-th cohomology of the complex $\Hom_{A^{e}}(\mathbb{P}_{A},A)$, i.e.\
\begin{align}
\Ext^{n}_{A^{e}}(A,A)={\ker d^*_{n+1}}/{\im d^*_n},
\end{align}
where $d^*_n=\Hom_{A^e}(d_n,A)$ and where $d_n$ are the maps in a minimal projective resolution: 
\begin{align}\label{eq:res}
\mathbb{P}:\cdots \to P_2 \xrightarrow{d_2} P_1 \xrightarrow{d_1} P_0 \xrightarrow{\mu} A \to 0.
\end{align}
Then the \emph{Hochschild cohomology } 
\begin{align}
\HH^*(A)=\Ext^*_{A^e}(A,A)
\end{align}
is a $k$-algebra which is graded-commutative.
There are various equivalent  ways to define
the product; here 
we will work with the Yoneda product. 


We specialize now to the quantum complete intersections. 
Let  $a$ be an integer such that $a\geq2$. We also let $q \in k$ be a primitive 
$a$th root of unity, and $A$ is the $k$-algebra
defined by 
\begin{align}
A=k\langle X,Y \rangle / (X^a,XY-qYX,Y^a). 
\end{align}
We write $x$ and $y$ for the residue classes of $X$ and $Y$, respectively. 

In \cite{be-08}, for arbitrary parameter $q\neq 0$, 
an explicit  minimal projective bimodule resolution $\mathbb{P}$ as in (\ref{eq:res}) 
was constructed. 
The $n$th bimodule in $\mathbb{P}$ is
\begin{align} 
P_n=\bigoplus^n_{i=0}A^ef^n_i, 
\end{align}
the free $A^e$-module of rank $n+1$ having generators $\{f^0_n, f^1_n, . . . , f^n_n\}$. For each $s \geq 0$ define the following four elements of $A^e$:
\begin{align}
\tau_1(s)  &= q^s(1\ot x)-   (x\ot1)\label{eq:tau1}\\
\tau_2(s)  &=    (1\ot y)-q^s(y\ot1)\label{eq:tau2}\\
\gamma_1(s)&= \sum_{j=0}^{a-1}q^{js}(x^{a-1-j}\ot x^j      )\label{eq:gamma1}\\
\gamma_2(s)&= \sum_{j=0}^{a-1}q^{js}(y^j      \ot y^{a-1-j})\label{eq:gamma2}
\end{align}
The maps $d_n:P_n \to P_{n-1}$ in $\mathbb{P}$ are given by
\begin{align}
d_{2t}  :f^{2t  }_i&\mapsto 
\begin{cases}\label{eq:diffeven}
\gamma_2\left(\frac{ai}{2}\right)f^{2t-1}_{i}+\gamma_1\left(\frac{2at-ai}{2}\right)f^{2t-1}_{i-1}&\text{for $i$ even}\\
-\tau_2\left(\frac{ai-a+2}{2}\right)f^{2t-1}_{i}+\tau_1\left(\frac{2at-ai-a+2}{2}\right)f^{2t-1}_{i-1}&\text{for $i$ odd}
\end{cases}\\
d_{2t+1}:f^{2t+1}_i&\mapsto
\begin{cases}\label{eq:diffodd}
\tau_2\left(\frac{ai}{2}\right)f^{2t}_{i}+\gamma_1\left(\frac{2at-ai+2}{2}\right)f^{2t}_{i-1}&\text{for $i$ even}\\
-\gamma_2\left(\frac{ai-a+2}{2}\right)f^{2t}_{i}+\tau_1\left(\frac{2at-ai+a}{2}\right)f^{2t}_{i-1}&\text{for $i$ odd}
\end{cases}
\end{align}
where the convention $f^n_{-1} = f^n_{n+1} = 0$ has been used. 
So far, $q$ is arbitrary. Later in our setting we will simplify these 
expressions. 


We will wish to identify nilpotent elements of Hochschild cohomology. This
can be done by exploiting the following result of N. Snashall and \O. Solberg,
see Proposition 4.4 in \cite{ss-04}. 

\begin{prop}\label{nilpotent}  Assume $k$ is a field and $A$ is a finite-dimensional $k$-algebra.
Suppose $\eta$ is a map into $A$ representing  an element of $\HH^n(A)$. If 
${\rm im}(\eta)$ is in the radical of $A$ then $\eta$ is nilpotent in $\HH^*(A)$.\end{prop}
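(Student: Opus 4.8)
The plan is to work with the Yoneda description of the product on $\HH^*(A)$ and to show that if $\eta$ represents a class in $\HH^n(A)$ whose image lies in the radical $\mathfrak{r}=\operatorname{rad}(A)$, then the class $\eta^m$ is represented by a map whose image lies in $\mathfrak{r}^m$; since $A$ is finite-dimensional, $\mathfrak{r}$ is nilpotent, so $\mathfrak{r}^m=0$ for $m$ large, giving $\eta^m=0$.

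First I would fix a minimal projective resolution $\mathbb{P}\to A$ as in \eqref{eq:res} and recall that a degree-$n$ Hochschild class can be represented either by a chain map $\mathbb{P}\to\mathbb{P}[n]$ lifting a map $A\to A$, or simply by the map $\eta\colon P_n\to A$ (equivalently an $A^e$-linear map, or an element of $\Hom_{A^e}(P_n,A)$ that is a cocycle). The Yoneda product $\eta\cdot\xi$ of classes in degrees $n$ and $m$ is computed by lifting $\xi\colon P_m\to A$ along the truncated resolution to a chain map $(\xi_j\colon P_{m+j}\to P_j)_{j\ge 0}$ with $\xi_0=\xi$, and then forming $\eta\circ\xi_n\colon P_{n+m}\to A$. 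The key observation is that since $\mathbb{P}$ is a \emph{minimal} resolution, every differential $d_j$ has image contained in $\mathfrak{r}\cdot P_{j-1}$ (equivalently, the matrices of the $d_j$ in \eqref{eq:diffeven}--\eqref{eq:diffodd} have entries in $\mathfrak{r}A^e$, because the entries $\tau_i(s),\gamma_i(s)$ lie in the radical of $A^e$).

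The heart of the argument is then a lifting estimate: if $\xi\colon P_m\to A$ has image in $\mathfrak{r}^t$, I claim the comparison maps $\xi_j\colon P_{m+j}\to P_j$ can be chosen with image in $\mathfrak{r}^{t} P_j$ for all $j\ge 0$ — actually one even gains a power of $\mathfrak{r}$ at the top, but $\mathfrak{r}^t$ suffices. This is proved by induction on $j$: $\xi_0=\xi$ has image in $\mathfrak{r}^t\subseteq \mathfrak{r}^t P_0$ (identifying $P_0$-components suitably), and given $\xi_{j-1}$ with image in $\mathfrak{r}^t P_{j-1}$, the composite $\xi_{j-1}\circ d_{m+j}\colon P_{m+j}\to P_{j-1}$ has image in $\mathfrak{r}^t\cdot(\mathfrak{r} P_{j-1})\subseteq \mathfrak{r}^t\cdot \ker(P_{j-1}\to P_{j-2})$ (using the cochain condition and minimality), and since $d_j\colon P_j\to \ker(P_{j-1}\to P_{j-2})$ is a projective cover, one can lift through $P_j$ while staying inside $\mathfrak{r}^t P_j$ — here one uses that a surjection of $A^e$-modules which is a projective cover admits, for any map from a projective into a submodule generated within $\mathfrak{r}^t$, a lift landing in $\mathfrak{r}^t$ of the source, by Nakayama. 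Applying this with $\xi=\eta$ itself ($t=1$ by hypothesis), we get comparison maps $\eta_j$ with image in $\mathfrak{r}P_j$; iterating, $\eta^m$ is represented by $\eta\circ\eta_n^{(1)}\circ\eta_{?}^{(2)}\circ\cdots$, a composite of $m$ maps each contributing a factor $\mathfrak{r}$, so its image lies in $\mathfrak{r}^m$.

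The main obstacle is making the lifting-with-radical-control step fully rigorous: one must be careful that "image in $\mathfrak{r}^t P_j$" is the right invariant notion (the $P_j$ are free $A^e$-modules, and $\mathfrak{r}(A^e)=\mathfrak{r}\ot A + A\ot\mathfrak{r}$, so $\mathfrak{r}^t P_j$ should be interpreted via the radical of $A^e$, or one works with $\mathfrak{r}_A$ acting on $A$ on the target and tracks where generators go), and that the projective-cover lifting genuinely improves the radical filtration rather than merely preserving it. Once that technical point is settled, the nilpotency of $\mathfrak{r}$ (finite dimensionality of $A$) finishes the proof immediately. I should also note that graded-commutativity of $\HH^*(A)$ lets us ignore ordering issues: $\eta$ nilpotent as a left multiplier is equivalent to $\eta$ nilpotent in the ring.
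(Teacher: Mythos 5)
The paper does not give its own proof of this proposition: it is imported verbatim from Snashall--Solberg \cite{ss-04}, Proposition 4.4. So your attempt can only be judged on its own merits. Your overall strategy --- show that $\eta^m$ admits a representative with image in $\mathfrak{r}^m$, where $\mathfrak{r}=\operatorname{rad}(A)$, and conclude from $\mathfrak{r}^m=0$ for $m\gg0$ --- is the right one and is essentially how the result is proved in the literature.

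However, the step you yourself flag as ``the main obstacle'' is a genuine gap, not a deferred technicality, and the way you propose to close it (``lift through $P_j$ while staying inside $\mathfrak{r}^tP_j$ \dots by Nakayama'') fails as stated. Write $N_j=\ker(d_{j-1})=\operatorname{im}(d_j)$ and let $J$ denote the radical of $A^e$. What you actually know about $\xi_{j-1}\circ d_{m+j}$ is that its image lies in $N_j\cap J^{t+1}P_{j-1}$, the \emph{extrinsic} radical filtration of $N_j$; what you need in order to lift into $J^tP_j$ is that its image lies in $d_j(J^tP_j)=J^tN_j$, the \emph{intrinsic} one. The inclusion $N_j\cap J^{t+1}Q\subseteq J^tN_j$ is false in general, even for images of projective covers inside the radical: for $\Lambda=k[x]/(x^4)$ and $d=x^2\cdot\colon\Lambda\to\Lambda$ one has $N=x^2\Lambda\subseteq J\Lambda$, yet $N\cap J^2\Lambda=N$ while $JN=x^3\Lambda=d(J\Lambda)$, so a map onto $N\cap J^2\Lambda$ admits no lift with image in $J\Lambda$ (and this $d$ does occur in a minimal resolution, namely that of $\Lambda/(x^2)$). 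Your parenthetical ``$\subseteq\mathfrak{r}^t\cdot\ker(P_{j-1}\to P_{j-2})$'' silently asserts exactly the containment that is in question. (Secondary point: $\operatorname{rad}(A^e)=\mathfrak{r}\otimes A+A\otimes\mathfrak{r}$ requires $A/\mathfrak{r}$ separable; harmless here since $A$ is local with residue field $k$, but worth stating.) The standard repair is to change resolutions rather than to fight the minimal one: transfer the class to the bar resolution by a comparison map lifting $\operatorname{id}_A$ --- composition with a comparison map only shrinks the image, so the new representative still lands in $\mathfrak{r}$ --- and use that the cup product there is the pointwise formula $(f\smile g)(a_1\otimes\cdots\otimes a_{n+m})=f(a_1\otimes\cdots\otimes a_n)\,g(a_{n+1}\otimes\cdots\otimes a_{n+m})$, whence $\operatorname{im}(f^{\smile m})\subseteq\mathfrak{r}^m=0$ immediately. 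With that substitution your argument closes; without it, the inductive lifting estimate on the minimal resolution remains unproved.
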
 


\section{Dimensions of Hochschild cohomology groups} \label{sec:dimHH}
We recall an unpublished result by Petter A.\ Bergh and Karin Erdmann which determines the dimensions. 

By viewing $A$ as a left $A^e$-module, it follows from \cite[VI.5.3]{ce-99} that $D(\HH^*(A,A))$ is isomorphic to $\Tor^{A^e}_*(D(A),A)$ as a vector space, where $D$ denotes the usual $k$-dual i.e.\ $D(-):=\Hom_k(-,k)$. In particular, we see that $\dim \HH^n(A) = \dim \Tor^{A^e}_n(D(A),A)$ for all $n\geq0$. Moreover, it follows from \cite{be-08} that $A$ is a Frobenius algebra with Nakayama automorphism $\nu:A \to A$ defined by
\begin{align}
\nu:
\begin{cases}
x \mapsto q^{1-a}x \\
y \mapsto q^{a-1}y.
\end{cases}
\end{align}
The bimodules $D(A)$ and $_\nu A_1$ are isomorphic; 
here the left action on $_{\nu}A_1$ is taken as $a \cdot m := \nu(a)m$. Consequently the dimensions of the Hochschild cohomology of $A$ are given by
\begin{align}
\dim \HH^n(A) = \dim \Tor^{A^e}_n(_\nu A_1,A)
\end{align}
for all $n \geq 0$. 

To compute $\Tor^{A^e}_n(_\nu A_1 , A )$, we tensor the deleted projective bimodule resolution $\mathbb{P}_A$ with the right $A^e$-module $_\nu A_1$. We then obtain an isomorphism 
\begin{center}
\begin{tikzpicture}
\matrix(m)[matrix of math nodes,row sep=2.0em,column sep=3.0em,text height=1.5ex,text depth=0.25ex]
{
   \cdots    & _\nu A_1 \otimes_{A^e}P_{n+1}             &  _\nu A_1 \otimes_{A^e}P_{n  }          & _\nu A_1 \otimes_{A^e}P_{n-1}          &  \cdots          \\
   \cdots    & \oplus^{n+1}_{i=0}(_\nu A_1)e^{n+1}_i     &  \oplus^{n+1}_{i=0}(_\nu A_1)e^{n  }_i  & \oplus^{n+1}_{i=0}(_\nu A_1)e^{n-1}_i  &  \cdots          \\
};
\draw[-> ,         font=\scriptsize](m-1-1) edge         node[above]{$                $} (m-1-2);
\draw[-> ,         font=\scriptsize](m-1-2) edge         node[above]{$ 1 \ot d_{n+1}  $} (m-1-3);
\draw[-> ,         font=\scriptsize](m-1-3) edge         node[above]{$ 1 \ot d_{n  }  $} (m-1-4);
\draw[-> ,         font=\scriptsize](m-1-4) edge         node[above]{$                $} (m-1-5);
\draw[-> ,         font=\scriptsize](m-2-1) edge         node[above]{$                $} (m-2-2);
\draw[-> ,         font=\scriptsize](m-2-2) edge         node[above]{$ \delta_{n+1}   $} (m-2-3);
\draw[-> ,         font=\scriptsize](m-2-3) edge         node[above]{$ \delta_{n  }   $} (m-2-4);
\draw[-> ,         font=\scriptsize](m-2-4) edge         node[above]{$                $} (m-2-5);
\draw[-> ,         font=\scriptsize](m-1-2) edge         node[right]{$ \cong          $} (m-2-2);
\draw[-> ,         font=\scriptsize](m-1-3) edge         node[right]{$ \cong          $} (m-2-3);
\draw[-> ,         font=\scriptsize](m-1-4) edge         node[right]{$ \cong          $} (m-2-4);
\end{tikzpicture}
\end{center}
of complexes, where $\{e^n_0 , e^n_1 , \dots, e^n_n \}$ is the standard generating set of $n + 1$ copies of $_\nu A_1$. Now given an element $\alpha \in k$ and a positive integer $t$, define an element $K_t(\alpha)\in k$ by
\begin{align}
K_t(\alpha) := \sum_{j=0}^{t-1}\alpha^j. 
\end{align}
The map $\delta_n$ is then given by 
\begin{align}
&\delta_{2t}:y^ux^ve^{2t}_i     \mapsto \nonumber\\
&\begin{cases}
qK_a(q^{v+1})y^{u+a-1}x^ve^{2t-1}_i+K_a(q^{u+1})y^ux^{v+a-1}e^{2t-1}_{i-1}&\text{for $i$ even}\\
[q^{v+1}-q^{a-1}]y^{u+1}x^ve^{2t-1}_i+[q^{u+2}-1]y^ux^{v+1}e^{2t-1}_{i-1} &\text{for $i$ odd}
\end{cases} \\
&\delta_{2t+1}:y^ux^ve^{2t+1}_i \mapsto \nonumber\\
&\begin{cases}
[q^{a-1}-q^v]y^{u+1}x^ve^{2t}_i+K_a(q^{u+2})y^ux^{v+a-1}e^{2t}_{i-1}&\text{for $i$ even}\\
qK_a(q^{v+2})y^{u+a-1}x^ve^{2t}_i+[q^{u+1}-1]y^ux^{v+1}e^{2t}_{i-1} &\text{for $i$ odd}
\end{cases}
\end{align}
where we use the convention $e^n_{-1} = e^{n}_{n+1} = 0$. 
This was proved in  \cite{be-08} in a more general setting, and 
by specializing $q$ and using that $x, y$ have the same
nilpotency index, we obtain precisely the above formulae.

For the following result we use this complex to compute the Hochschild cohomology of our algebra $A$, in the case when $q$ is a primitive $a$th root of unity. The result shows that the dimensions of the cohomology groups do not depend on the characteristic of the field.
\begin{thm}\label{thm:dim}
If $q$ is a primitive $a$th root of unity, then $\dim_k \HH^n(A) = 2n + 2$ for all $n\geq0$.
\end{thm}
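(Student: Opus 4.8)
The plan is to compute $\dim_k \Tor^{A^e}_n({}_\nu A_1, A) = \dim_k H_n(C_\bullet)$ directly from the explicit complex $C_\bullet$ with terms $C_n = \bigoplus_{i=0}^n ({}_\nu A_1) e^n_i$ and differentials $\delta_n$ given above. Since $A$ has $k$-basis $\{y^u x^v : 0 \le u, v \le a-1\}$, each term $C_n$ has dimension $(n+1) a^2$, so the Euler characteristic argument alone will not suffice; I will need the ranks of the $\delta_n$ precisely. Because $q$ is a \emph{primitive} $a$th root of unity, the scalars $K_a(q^{m})$ collapse dramatically: $K_a(q^m) = \sum_{j=0}^{a-1} q^{mj}$ equals $a$ (which is nonzero in $k$ only if $\mathrm{char}\,k \nmid a$ — but note the factor $x^{a-1}$ or $y^{a-1}$ multiplying it lands in the socle, and when $a \equiv 0$ the term $y^{u+a-1}$ forces $u=0$ anyway, so I must track this carefully) when $m \equiv 0 \pmod a$ and equals $0$ otherwise; similarly each bracketed scalar $[q^r - q^s]$ vanishes iff $r \equiv s \pmod a$. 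So $\delta_n$ becomes, on each basis vector $y^u x^v e^n_i$, a sum of at most two monomial terms, each of which is either zero or a nonzero scalar times a single basis vector of $C_{n-1}$.

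First I would record, for each parity and each parity of $i$, exactly which basis vectors $y^u x^v e^n_i$ are sent to zero and which are sent to (a scalar multiple of) a single basis vector versus a genuine two-term combination; the congruence conditions on $u, v$ modulo $a$ are what govern this. The key structural observation I expect to exploit is that, after these simplifications, the complex splits as a direct sum over residue classes of a "weight" (roughly $u + v \bmod a$ together with the index $i$), so that $C_\bullet$ decomposes into many small subcomplexes whose homology can be computed by hand. Concretely I would look for an internal grading preserved by $\delta_\bullet$ — the maps change $(u,v)$ by $(a-1,0)$, $(0,a-1)$, $(1,0)$, or $(0,1)$, all of which shift $u+v$ by $\pm 1 \pmod a$, and they shift $i$ by $0$ or $-1$ — and use it to reduce to understanding a manageable family of subcomplexes indexed by the total internal degree.

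After the decomposition, the computation of $H_n$ reduces to linear algebra on these pieces: I would compute $\dim \ker \delta_n - \dim \im \delta_{n+1}$ piece by piece, using that $\dim C_n = (n+1)a^2$ and $\sum_{j} (-1)^j \dim C_j$ telescopes, but more usefully using the explicit monomial description to read off $\mathrm{rank}\,\delta_n$. I would expect the answer $2n+2$ to emerge as a small constant contribution per internal-degree class, summed over the $O(n)$ classes, plus a boundary correction coming from the conventions $e^n_{-1} = e^n_{n+1} = 0$ at the two ends $i=0$ and $i=n$, which is where the "$+2$" and the coefficient "$2$" should come from. I would finally check the base case $n=0$ (where $\HH^0(A) = Z(A)$, the center, which one can compute directly to have dimension $2$) as a sanity check, and verify consistency with the known $a=2$ computation in \cite{bgms-05}.

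The main obstacle I anticipate is bookkeeping: disentangling the four cases (even/odd $n$, even/odd $i$) together with the congruence conditions on $u$ and $v$, and correctly handling the characteristic-dependent vanishing of $K_a$ so that the final dimension count genuinely comes out independent of $\mathrm{char}\,k$ — the point being that whenever the scalar $a = K_a(1)$ would threaten to vanish in positive characteristic dividing $a$, the accompanying monomial $y^{u+a-1}$ or $x^{v+a-1}$ is forced (by the basis constraint $0 \le u,v \le a-1$) to have $u=0$ or $v=0$, and a separate term takes over, so no rank is actually lost. Making that cancellation-of-obstructions precise, and organizing the subcomplex decomposition cleanly enough that the homology of each piece is visibly one- or two-dimensional, is the technical heart of the argument.
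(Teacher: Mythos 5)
Your proposal takes essentially the same route as the paper: both compute $\dim_k\Tor^{A^e}_n({}_\nu A_1,A)$ by determining $\ker\delta_n$ and $\im\delta_n$ directly from the explicit complex, using that $K_a(q^m)=0$ unless $m\equiv 0\pmod a$ and that the bracketed scalars vanish exactly on the stated congruence conditions, then applying rank--nullity and checking $n=0$ via the centre. Two small corrections: the internal grading you hope to exploit does not actually split the complex, since within a single $\delta_n$ some terms shift $u+v$ by $+1\pmod a$ and others by $-1$ (e.g.\ in $\delta_{2t+1}$ for $i$ even the two summands shift by $+1$ and $-1$ respectively), so the paper instead just lists directly which monomials die and which kernel elements occur as two-term pairs; and your worry about $\mathrm{char}\,k$ dividing $a$ is vacuous, because the existence of a primitive $a$-th root of unity in $k$ already forces $\mathrm{char}\,k\nmid a$, which is exactly the observation the paper makes before counting.
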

\begin{proof}
Since $\HH^0(A)$ is isomorphic to the centre of $A$, we see immediately that $\HH^0(A)$ is $2$-dimensional. To find the dimension of $\HH^n(A)$ for $n > 0$, we compute $\ker \delta_{2t}$ for $t\geq1$ and $\ker \delta_{2t+1}$ for $t \geq 0$.

We first compute $\ker \delta_{2t}$ for $t \geq 1$. 
Since $k$ contains a primitive $q$-th root of unity, the characteristic of $k$ does not divide $a$. The equalities $0 = 1 - (q^m)^a = (1 - q^m)K_a(q^m)$, valid for any integer $m$, imply that $K_a(q^v+1) = 0$ if and only if $0 \geq v \geq a-2$, whereas $K_a(q^u+1) = 0$ if and only if $0 \geq u \geq a-2$. Therefore 
\begin{align}
\delta_{2t}(y^ux^ve^{2t}_i) \Leftrightarrow 
\begin{cases}
u\in\{0,\dots,a-2\},\, v\in\{0,\dots,a-2\},\,\text{$i$ even} \\
u=a-1\, v\in\{0,\dots,a-1\},\,\text{$i$ even} \\
u\in\{0,\dots,a-2\},\, v=a-1,\,\text{$i$ even} \\
u=0,\, v=a-1,\,i=2t \\
u=a-1,\, v=0,\,t=0 \\
u=a-2,\, v=a-2,\,\text{$i$ odd} \\
u=a-1,\, v=a-1,\,\text{$i$ odd} \\
\end{cases}
\end{align}
and there are $a^2 t + a^2$ such elements. As for linear combinations in $\ker \delta_{2t}$ of at least two basis vectors from $\oplus^{2t}_{i=0}(_\nu A_1)e^{2t}_i$, they appear in pairs. Namely, they are linear combinations of the pairs
\begin{align}
x^{a-1}e^{2t}_i +C y^{a-1}x^{a-2}e^{2t}_{i+1}   &i=0,2,\dots,2t-2 \\ 
y^{a-1}e^{2t}_i +C'y^{a-2}x^{a-1}e^{2t}_{i-1}   &i=2,4,\dots,2t 
\end{align}
where $C$ and $C'$ are scalars whose values depend on all the parameters involved. There are $2t$ such elements, hence $\dim_k \ker \delta_{2t} = (a^2 + 2)t + a^2$.

Next we compute $\ker \delta_{2t+1}$ for $t \geq 0$, recall that 
the characteristic of $k$ does not divide $a$. We see that  
\begin{align}
\delta_{2t+1}(y^ux^ve^{2t+1}_i) = 0 \Leftrightarrow 
\begin{cases}
u=a-1\, v\in\{0,\dots,a-1\},\,\text{$i$ arbitrary} \\
u\in\{0,\dots,a-2\},\, v = a - 1,\,\text{$i$ arbitrary} 
\end{cases}
\end{align}
and there are $(2a-1)(2t+2)$ such elements. As for linear combinations in $\ker \delta_{2t+1}$ of at least two basis vectors from $\oplus^{2t+1}_{i=0}(_\nu A_1)e^{2t+1}_i$, also here they appear in pairs. Namely, they are linear combinations of the pairs 
\begin{align}
y^ux^ve^{2t+1}_i + Cy^{u+1}x^{v-1}e^{2t+1}_{i+1}
\end{align}
for $0 \leq u \leq a-3$, $1 \leq v \leq a-2$ and $i=0,2,\dots,2t$, and where $C$ is a scalar. There are $(a - 2)^2(t + 1)$ such pairs (note that, if $a = 2$, then the requirements on $u$ and $v$ are empty, but this causes no problem since by the formula there are no pairs in this situation), and so $\dim_k \ker \delta_{2t+1} = (a^2 + 2)t + a^2 + 2$.

We have now computed $\ker \delta_{2t}$ for $t \geq 1$ and $\ker \delta_{2t+1}$ for $t \geq$. Using the equalities
\begin{align}
\dim_k\im\delta_n+\dim_k\ker\delta_n=\dim_k\oplus_{i=0}^n(_\nu A_1)e^n_i)=(n+1)a^2, 
\end{align}
we see that $\dim_k \im \delta_{2t+1} = \dim_k \im \delta_{2t+2} = (a^2 - 2)(t + 1)$. Consequently
\begin{align}
\dim_k \HH^{2t+1}(A) &= \dim_k \ker \delta_{2t+1} - \dim_k \im \delta_{2t+2} \\
                     &= 4t+4 \\
\dim_k \HH^{2t+2}(A) &= \dim_k \ker \delta_{2t+2} - \dim_k \im \delta_{2t+3} \\ 
                     &= 4t+6
\end{align}
for $t \geq 0$, and the proof is complete. 
\end{proof}

This result implies immediately the following: 

\begin{cor}\label{cor:dim}
The dimension of the cohomology groups $\HH^n(A)$ is independent of $a$. 
\end{cor}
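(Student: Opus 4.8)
The plan is essentially a one-line deduction from Theorem~\ref{thm:dim}. That theorem gives, whenever $q$ is a primitive $a$th root of unity, the formula $\dim_k \HH^n(A) = 2n+2$ for every $n \geq 0$; since the right-hand side contains no reference whatsoever to the parameter $a$, the dimension of $\HH^n(A)$ is the same for all admissible choices of $a$. That is the whole content of the corollary, so I would simply invoke Theorem~\ref{thm:dim} and observe that the stated dimension is a function of $n$ alone.

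It is worth flagging where the independence really comes from, since the statement is perhaps less trivial than it first looks. In the proof of Theorem~\ref{thm:dim} almost every intermediate quantity does depend on $a$: one has $\dim_k \ker \delta_{2t} = (a^2+2)t + a^2$ and $\dim_k \ker \delta_{2t+1} = (a^2+2)t + a^2 + 2$, the ambient spaces have dimension $(n+1)a^2$, and the image dimensions are $(a^2-2)(t+1)$. The point is that in forming $\dim_k \HH^n(A) = \dim_k \ker \delta_n - \dim_k \im \delta_{n+1}$ the $a^2$-terms cancel exactly, leaving a linear function of $n$ with integer coefficients free of $a$. So the corollary is a shadow of this cancellation, and no separate argument is required.

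Accordingly there is no real obstacle to overcome for the corollary itself: the only subtlety — the exact cancellation of all $a$-dependent contributions — has already been carried out in the proof of Theorem~\ref{thm:dim}, and the corollary merely records the conclusion. (Incidentally, the same formula shows, as already remarked before the theorem, that the dimension is also independent of the characteristic of $k$ among those characteristics not dividing $a$.)
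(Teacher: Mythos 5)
Your proposal is correct and matches the paper exactly: the paper derives the corollary as an immediate consequence of Theorem~\ref{thm:dim}, whose formula $\dim_k \HH^n(A) = 2n+2$ makes no reference to $a$. Your additional remarks about where the $a^2$-terms cancel inside the proof of the theorem are accurate but not needed for the corollary itself.
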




\section{Hochschild cohomology when $a=2$} \label{sec:HHa2}
In this section we let $a=2$ and $q=-1$ (and $\mathrm{char}(k) \neq 2$), so we have that 
\begin{align}
A=k\langle X,Y \rangle/(X^2, XY+YX,Y^2). 
\end{align}
We write $x, y$ again for the images of $X, Y$ in $A$. 
We will rewrite the differentials for the minimal projective resolution
before studying  the even cohomology ring $\HH^{2*}(A)$ for this case. 

\subsection{Minimal projective resolution when $a=2$} 
We introduce the following notation: 
\begin{align}
\beta_y  = (1 \ot y)+(y \ot 1) && \beta_x  = (1 \ot x)+(x \ot 1) \\
\alpha_y = (1 \ot y)-(y \ot 1) && \alpha_x = (1 \ot x)-(x \ot 1)
\end{align}
Now we can rewrite the differentials for the minimal projective resolution $\mathbb{P}$ in Equation \ref{eq:diffeven} and \ref{eq:diffodd}; we get: 
\begin{align}\label{eq:difffin}
d_n(f^n_i)= 
\begin{cases}
(-1)^i(\beta_y f^{n-1}_i + \beta_x f^{n-1}_{i-1}) \qquad \text{when $n$ is even}\\
(-1)^i(\alpha_yf^{n-1}_i - \alpha_xf^{n-1}_{i-1}) \qquad \text{when $n$ is odd }. 
\end{cases}
\end{align}

\subsection{Description of cohomology groups}\label{subsec:cohgrps}
In Section \ref{sec:dimHH} we have seen that $\dim \HH^n(A)= 2n+2$. Knowing this, we will determine a basis for $\HH^n(A)$ for arbitrary even degrees $n$. We write $\delta_{ir}$ as usual for the Kronecker symbol.

%
\begin{lem}\label{lem:basisHH} Let $n=2t$. For $r=0, 1, \ldots, 2t$  define maps $\xi_r, \eta_r: P_{2t}\to A$ as follows.
\begin{align}
\xi_r(f_i^{2t}) = \delta_{ir}\cdot 1_A, \qquad \eta_r(f_i^{2t}) =  \delta_{ir}\cdot xy.
\end{align}
(a) \ The classes of these maps form a basis for $\HH^{2t}(A)$.\\
(b) \ The classes of the $\eta_r$ give nilpotent elements in $\HH^*(A)$. 
\end{lem}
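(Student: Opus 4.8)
The plan is to prove (a) and (b) together by exploiting the combinatorial simplicity of the resolution $\mathbb{P}$ in the form (\ref{eq:difffin}), together with the dimension count $\dim_k\HH^{2t}(A)=4t+2$ established in Section \ref{sec:dimHH}.

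First I would set up the cochain complex $\Hom_{A^e}(\mathbb{P},A)$ in degree $2t$. A map $P_{2t}\to A$ is determined by the images of the generators $f_i^{2t}$, and since $\Hom_{A^e}(A^ef_i^{2t},A)\cong A$ (via evaluation at $f_i^{2t}$), a cochain is just a tuple $(\phi_0,\dots,\phi_{2t})$ with $\phi_i=\phi(f_i^{2t})\in A$. I would then compute the two relevant differentials explicitly from (\ref{eq:difffin}): the incoming differential $d_{2t}^*$ (whose image is the coboundaries hitting degree $2t$) and the outgoing differential $d_{2t+1}^*$ (whose kernel is the cocycles). Using $d_{2t+1}(f_i^{2t+1})=(-1)^i(\alpha_y f_i^{2t}-\alpha_x f_{i-1}^{2t})$ and the fact that $\alpha_x, \alpha_y$ act on $A$ (viewed as $A^e$-module) by $m\mapsto mx-xm$ and $m\mapsto my-ym$, I would write down the cocycle condition and the coboundary relations as explicit linear conditions on the $\phi_i$.

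Next, for (a), I would verify that $\xi_r$ and $\eta_r$ are cocycles: since $1_A$ is central, $\alpha_x(1_A)=\alpha_y(1_A)=0$, so $\xi_r\circ d_{2t+1}=0$; since $xy$ is central in $A$ (here $q=-1$, $a=2$, and a quick check shows $x(xy)=(xy)x$, $y(xy)=(xy)y$ because $x^2=y^2=0$ and $xy=-yx$), likewise $\eta_r\circ d_{2t+1}=0$. Then I would check that no nonzero linear combination of the $\xi_r,\eta_r$ is a coboundary: a coboundary has the form $f_i^{2t}\mapsto (-1)^i(\beta_y\psi_i+\beta_x\psi_{i-1})$ for some $(\psi_0,\dots,\psi_{2t-1})$ with $\psi_j\in A$, and since $\beta_x, \beta_y$ act by $m\mapsto mx+xm$ and $m\mapsto my+ym$, their images lie in $xA+Ax+yA+Ay\subseteq \operatorname{rad}A$; in particular the images never have a nonzero scalar component, so $\xi_r$ (image $1_A$) is never a coboundary, and one checks that $xy$ is also not in the image of $\beta_x,\beta_y$ on $A$ — indeed $\beta_x(A)+\beta_y(A)$ is spanned by $2x, 2y$ (using $\beta_x(y)=yx+xy=0$ etc.), which does not contain $xy$. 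So the $2(2t+1)=4t+2$ classes are linearly independent in $\HH^{2t}(A)$, and by the dimension formula they form a basis, giving (a).

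Finally, for (b), $\eta_r$ sends every generator into $k\cdot xy\subseteq \operatorname{rad}A$, so $\operatorname{im}(\eta_r)\subseteq\operatorname{rad}A$, and Proposition \ref{nilpotent} immediately gives that the class of $\eta_r$ is nilpotent in $\HH^*(A)$. The main obstacle I anticipate is purely bookkeeping: correctly carrying the signs $(-1)^i$ and the boundary conventions $f_n^{-1}=f_n^{n+1}=0$ through the cocycle and coboundary computations, and in particular making the linear-independence argument for the $\xi_r,\eta_r$ airtight rather than hand-waving that coboundaries "land in the radical" — one must genuinely identify the span of $\beta_x(A)+\beta_y(A)$ inside $A$ and confirm it misses both $1_A$ and $xy$. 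Everything else is essentially forced once the differentials are written out.
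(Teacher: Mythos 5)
Your proposal is correct and follows essentially the same route as the paper: verify the cocycle condition using that $\alpha_x,\alpha_y$ annihilate $1_A$ and $xy$, show linear independence modulo coboundaries by computing that $\beta_x,\beta_y$ applied to a general element of $A$ only produce scalar multiples of $x$ and $y$ (so the coboundary image misses $1_A$ and $xy$), conclude by the dimension count $4t+2$, and deduce (b) from Proposition \ref{nilpotent} since $\operatorname{im}(\eta_r)\subseteq\operatorname{rad}A$. The only cosmetic difference is that you justify $\alpha_x\cdot xy=\alpha_y\cdot xy=0$ by centrality of $xy$ where the paper uses that $xy$ lies in the socle; both are valid.
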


\begin{proof} Part (b) will follow from Proposition \ref{nilpotent}. We prove 
now part (a). 
Note that these are $2n+2$ elements, so we only have to show that the maps are in the kernel of $d_{2t+1}^*$, and that they are linearly independent modulo the image of $d_{2t}^*$. 
\begin{itemize}
\item[(1)]  We apply $\xi_r$ to $d_{2t+1}(f_i^{2t+1})$, this gives
\begin{align}
\xi_r[(-1)^i(\alpha_yf_i^{2t} - \alpha_xf_{i-1}^{2t})] =
(-1)^i [\alpha_y[\delta_{ir}\cdot 1_A] - \alpha_x[\delta_{i-1,r}\cdot 1_A] ]= 0
\end{align}
(we view $A$ as a left $A^e$ module, and $\alpha_y \cdot 1_A=0= \alpha_x\cdot 1_A$). Similarly we apply $\eta_r$ to $d_{2t+1}(f_i^{2t+1})$ and get 
\begin{align}
\eta_r[(-1)^i(\alpha_yf_i^{2t} - \alpha_xf_{i-1}^{2t})] =
(-1)^i [\alpha_y[\delta_{ir}\cdot xy] - \alpha_x[\delta_{i-1,r}\cdot xy]] = 0
\end{align} 
(since $xy$ is in the socle of $A$ we see that $\alpha_y\cdot xy=0$ and $\alpha_x\cdot xy=0$).
\item[(2)]  Let $c_r, d_r\in K$ and $\rho: P_{2t-1}\to A$ such that
\begin{align}
\sum_{r=0}^{2t} c_r\xi_r + d_r\eta_r  = \rho\circ d_{2t} \in {\rm im}(d_{2t}^*).
\end{align}
We must show that  $c_r =0 = d_r$ for all $r$. Write $\rho(f_i^{2t-1}) = p_i = a_i + b_ix + c_iy + d_ixy \in A$. Then we have
\begin{align}
\rho\circ d_{2t}(f_i^{2t}) = (-1)^i[\beta_y p_i + \beta_x p_{i-1}]  = (-1)^i[2a_iy + 2a_{i-1}x]
\end{align}
which are elements in $A$. On the other hand if we apply the map given by the sum to $f_i^{2t}$ then we get
\begin{align}
c_i \ + \ d_ixy
\end{align}
also elements in $A$. We assume these are equal, and it follows that all scalars are zero.
\end{itemize}
\end{proof}
\subsection{Products in even degrees of $\HH^*(A)$}
Recall that the even part $\HH^{2*}(A)$ is a subring of the Hochschild cohomology, and it is commutative. 
The aim of this section is to prove the following:

\medskip

\begin{thm} \label{R:a=2} Let
$k$ be a field with char$(k)\neq 2$, and let
$A=k\langle X, Y\rangle/(X^2, XY+YX, Y^2)$. 
 $R$ be the  subalgebra $R:= k\langle \xi_i^{2t}: t\geq 0, 0\leq i\leq 2t \rangle$ of the even Hochschild cohomology ring of $A$. This is $\mathbb{Z}_2$-graded, with 
\begin{align}
R_0:= k\langle \xi_{i}^{2t}: \text{$i$ even } \rangle, \qquad
R_1:= k\langle \xi_{i}^{2t}: \text{$i$ odd }  \rangle.
\end{align}
Then $R_0$ is isomorphic to the polynomial ring $k[z_0, z_1]$ where we identify $\xi_0^2$ with $z_0$ and $\xi_2^2$ with $z_1$. Moreover, the odd part is equal to $R_1 = R_0\xi_1^2$ and $\xi_1^2\cdot \xi_1^2=\xi_2^4$.
\end{thm}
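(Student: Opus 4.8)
The plan is to compute the Yoneda products $\xi_i^{2s}\cdot\xi_j^{2t}$ directly on the minimal resolution $\mathbb{P}$, using the simplified differentials in Equation \ref{eq:difffin}. Concretely, for a cocycle $\xi_j^{2t}:P_{2t}\to A$ one first lifts it to a chain map $\widetilde{\xi}_j^{2t}:\mathbb{P}[2t]\to\mathbb{P}$, i.e.\ maps $\phi_n:P_{n+2t}\to P_n$ with $\mu\phi_0=\xi_j^{2t}$ and $\phi_{n-1}d_{n+2t}=d_n\phi_n$; then the product $\xi_i^{2s}\cdot\xi_j^{2t}$ is represented by $\xi_i^{2s}\circ\phi_{2s}:P_{2s+2t}\to A$. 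The key structural observation I expect to exploit is that the differentials have the shape $d_n(f_i^n)=\pm(\beta_\bullet f_i^{n-1}+\beta_\bullet f_{i-1}^{n-1})$ (with $\alpha$'s in odd degree), so a natural candidate lift sends $f_i^{n+2t}$ to a signed sum of $f_{i-m}^n$ for suitable shifts $m$ determined by $j$; the point is that $\beta_x,\beta_y,\alpha_x,\alpha_y$ all act as zero on $1_A$ and that the relevant combinatorics of binomial-type coefficients collapses because $q=-1$.

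The concrete steps I would carry out: (1) fix notation for the generators $z_0=\xi_0^2$, $z_1=\xi_2^2$ living in $\HH^2(A)$, and compute the chain-map lifts of $\xi_0^2$ and $\xi_2^2$ explicitly — I expect $\xi_0^2$ to lift to the shift $f_i^{n+2}\mapsto f_i^n$ (up to sign) and $\xi_2^2$ to lift to $f_i^{n+2}\mapsto f_{i-2}^n$, so that $z_0^a z_1^b$ is represented by $\xi_j^{2t}$ with $t=a+b$ and $j=2b$; this already shows the monomials $z_0^a z_1^b$ with $a+b=t$ are exactly the classes $\xi_0^{2t},\xi_2^{2t},\dots,\xi_{2t}^{2t}$, which by Lemma \ref{lem:basisHH} are linearly independent in $\HH^{2t}(A)$. (2) Check $R_0\cap R_1=\{0\}$ and the grading: since $\HH^{2*}(A)$ is commutative and the $\xi$'s with $i$ even multiply to $\xi$'s with even index while (by the same lift computation) $\xi_1^2$ squares to $\xi_2^4$, one gets that the even-index $\xi$'s form a subring on which the assignment $z_0\mapsto\xi_0,\,z_1\mapsto\xi_2$ extends to a surjective ring map $k[z_0,z_1]\to R_0$; counting dimensions in each degree ($t+1$ monomials of degree $t$ versus $t+1$ basis vectors $\xi_0^{2t},\dots,\xi_{2t}^{2t}$) forces it to be an isomorphism. (3) Compute the lift of $\xi_1^2$ and compose to read off $\xi_1^2\cdot\xi_1^2$; here the lift sends $f_i^{n+2}\mapsto \pm f_{i-1}^n$, so composing twice lands on $f_{i-2}^n$ and hence represents $\xi_2^4$, and similarly $R_0\xi_1^2$ exhausts the odd-index $\xi$'s by the same shift bookkeeping, giving $R_1=R_0\xi_1^2$.

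The main obstacle I anticipate is step (1): constructing the chain-map lifts and verifying $\phi_{n-1}d_{n+2t}=d_n\phi_n$ on each generator $f_i^{n+2t}$. The difficulty is bookkeeping the signs $(-1)^i$ and the interaction of the $\alpha$'s and $\beta$'s across the even/odd alternation of the complex — one has to be careful that a lift which works in even homological degrees also closes up in odd degrees, possibly requiring correction terms (i.e.\ the naive shift $f_i\mapsto f_{i-m}$ may need to be adjusted by a chain homotopy or by extra $\alpha$-multiples). A secondary subtlety is confirming that the composite $\xi_i^{2s}\circ\phi_{2s}$ is genuinely $\delta_{\bullet,\bullet}\cdot 1_A$ on the nose rather than $1_A$ plus a radical term; but since any radical-valued correction is nilpotent by Proposition \ref{nilpotent} and we are free to work modulo such corrections when identifying classes in the basis of Lemma \ref{lem:basisHH}, this should not affect the final identification. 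Once the lifts are pinned down, the rest is the dimension count already supplied by Theorem \ref{thm:dim} and Lemma \ref{lem:basisHH}, together with graded-commutativity of $\HH^*(A)$.
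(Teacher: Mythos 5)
Your proposal is correct and follows essentially the same route as the paper: the paper also lifts a cocycle $\xi$ with $\xi(f_i^{2t})=p_i$ to the chain maps $h_s(f_i^{2t+s})=\sum_j p_{i-j}f_j^s$ (with the signs $(-1)^{i+j}$ for $s$ odd), which for a basis element $\xi_j^{2t}$ is exactly your signed shift $f_i\mapsto \pm f_{i-j}$, verifies the chain-map identity by the same even/odd case analysis, and reads off $\xi_i^{2m}\cdot\xi_j^{2t}=\xi_{i+j}^{2t+2m}$ on the nose (no radical correction terms arise), after which the identification of $R_0$ with $k[z_0,z_1]$ is the same dimension count against Lemma \ref{lem:basisHH}.
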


\begin{cor} Let $\cN$ be the largest homogeneous nilpotent ideal of
$\HH^{2*}(A)$. Then 
\begin{align}
\HH^{2*}(A)/\cN \cong R.
\end{align}
\end{cor}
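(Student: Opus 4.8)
The plan is to derive the Corollary directly from Theorem~\ref{R:a=2} together with Lemma~\ref{lem:basisHH}. First I would record that $\HH^{2*}(A)$ decomposes, in each degree $2t$, as the $k$-span of the classes $\xi_r^{2t}$ and $\eta_r^{2t}$ for $0\le r\le 2t$, by Lemma~\ref{lem:basisHH}(a). The subalgebra $R$ is by definition generated by the $\xi_i^{2t}$, and Theorem~\ref{R:a=2} identifies it completely: $R=R_0\oplus R_1$ with $R_0\cong k[z_0,z_1]$ a polynomial ring and $R_1=R_0\xi_1^2$ a free rank-one $R_0$-module, so $R$ is a (reduced, in fact a domain-like) commutative ring with \emph{no nonzero nilpotents}; this is the key structural input that shows $R\cap\cN=0$.

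Next I would show that the $k$-span of all the $\eta_r^{2t}$, call it $\mathfrak{n}$, is a homogeneous nilpotent ideal complementary to $R$, i.e.\ $\HH^{2*}(A)=R\oplus\mathfrak{n}$ as graded vector spaces and $\mathfrak{n}$ is an ideal with $\mathfrak{n}\subseteq\cN$. That $\HH^{2*}(A)=R\oplus\mathfrak{n}$ is exactly the basis statement of Lemma~\ref{lem:basisHH}(a). That each $\eta_r^{2t}$ is nilpotent is Lemma~\ref{lem:basisHH}(b) (via Proposition~\ref{nilpotent}, since $\eta_r$ has image in $\mathrm{rad}(A)=(x,y)$). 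To see $\mathfrak{n}$ is an \emph{ideal} of $\HH^{2*}(A)$: the Yoneda product of any class with a class represented by a map whose image lies in $\mathrm{rad}(A)$ is again represented by a map with image in $\mathrm{rad}(A)$ (the lift of such a cocycle through the projective resolution can be taken with all components landing in $\mathrm{rad}(A^e)$, as the generators $f^n_i$ map to radical elements under the differentials described in~\eqref{eq:difffin}); hence $\HH^{2*}(A)\cdot\mathfrak{n}\subseteq\{$classes with image in $\mathrm{rad}(A)\}$, and intersecting with the graded pieces this is spanned by the $\eta_r$'s together with nothing new — more carefully, $\mathfrak{n}$ is precisely the set of classes in $\HH^{2*}(A)$ representable by maps into $\mathrm{rad}(A)$, which is manifestly an ideal closed under the product, and every such class is nilpotent by Proposition~\ref{nilpotent}. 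So $\mathfrak{n}$ is a homogeneous nilpotent ideal with $R\oplus\mathfrak{n}=\HH^{2*}(A)$.

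Finally I would assemble the conclusion. Since $\mathfrak{n}$ is a homogeneous nilpotent ideal, $\mathfrak{n}\subseteq\cN$; hence the composite $R\hookrightarrow\HH^{2*}(A)\twoheadrightarrow\HH^{2*}(A)/\cN$ is surjective because $R+\cN\supseteq R+\mathfrak{n}=\HH^{2*}(A)$. Its kernel is $R\cap\cN$, which is a homogeneous nilpotent ideal of $R$; but $R$ is reduced by Theorem~\ref{R:a=2} (a polynomial ring in degree zero, free of rank one over it in degree one, with $\xi_1^2\cdot\xi_1^2=\xi_2^4\ne 0$, so no homogeneous element squares to zero and $R$ has no nonzero nilpotents), forcing $R\cap\cN=0$. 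Therefore $R\xrightarrow{\ \sim\ }\HH^{2*}(A)/\cN$, which is the claim. The main obstacle is the middle step: one must be sure that $\mathfrak{n}$ — the span of the $\eta_r$'s — is genuinely closed under multiplication by arbitrary even Hochschild classes and that it exhausts the nilpotents, i.e.\ that no nonzero combination of $\xi_r$'s is nilpotent; both follow from Theorem~\ref{R:a=2} and Proposition~\ref{nilpotent}, but the bookkeeping identifying "classes with radical image'' with the span of the $\eta_r$ in each degree is where care is needed.
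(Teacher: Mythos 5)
Your proposal is correct and takes essentially the same route as the paper, which likewise deduces the Corollary from the decomposition $\HH^{2*}(A)=R\oplus\mathrm{span}\{\eta_r\}$ together with the two facts that the span of the $\eta_r$ lies in $\cN$ and that $R\cap\cN=0$. You simply supply more detail than the paper's one-line argument (in particular the observation that $R$ is reduced, which justifies $R\cap\cN=0$); the excursion about $\mathfrak{n}$ being an ideal is not actually needed for the conclusion.
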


We fix a degree $2t$, and we will compute the product of a general element $\xi$ of degree $2t$ with an element $\chi$ of degree $2m$ and we let $2m$ vary. We take representatives $\xi:P_{2t} \to A$ and $\chi:P_{2m} \to A$ which 
are $k$-linear combinations of the basis.  Let
\begin{align}
\xi(f^{2t}_i)=p_i             \in A & \qquad \text{with $0 \leq i \leq 2t$} \\ 
\chi(f^{2m}_i)=\overline{p}_i \in A & \qquad \text{with $0 \leq i \leq 2m $}.  
\end{align}
By (4.5), the elements $p_i$ and $\bar{p}_i$ are then in the centre of $A$, 
we will use this freely.
 
\begin{defn}
The Yoneda product $\xi \bullet \chi$ is the residue class of $\chi \circ h_{2m}$ where the family $(h_s)$ with $h_s:P_{2t+s}\to P_s$ is a lifting of $\xi$. That is, we have the following diagram: 
\end{defn}
%
%
\begin{center}
\begin{tikzpicture}
\matrix(m)[matrix of math nodes,row sep=2.0em,column sep=2.3em,text height=1.5ex,text depth=0.25ex]
{
  P_{2t+2m}       & P_{2t+2m-1}     &  \cdots           &  P_{2t+s}       & P_{2t+s-1}     &  \cdots         & P_{2t+1}      &  P_{2t}        &                  \\
  P_{2m}          & P_{2m-1}        &  \cdots           &  P_{s}          & P_{s-1}        &  \cdots         & P_{1}         &  P_0           &   A              \\
  A               &                 &                   &                 &                &                 &               &                &                  \\
};
\draw[-> ,         font=\scriptsize](m-1-1) edge         node[above]{$ d_{2t+2m}      $} (m-1-2);
\draw[-> ,         font=\scriptsize](m-1-2) edge         node[above]{$                $} (m-1-3);
\draw[-> ,         font=\scriptsize](m-1-3) edge         node[above]{$                $} (m-1-4);
\draw[-> ,         font=\scriptsize](m-1-4) edge         node[above]{$ d_{2t+s}       $} (m-1-5);
\draw[-> ,         font=\scriptsize](m-1-5) edge         node[above]{$                $} (m-1-6);
\draw[-> ,         font=\scriptsize](m-1-6) edge         node[above]{$                $} (m-1-7);
\draw[-> ,         font=\scriptsize](m-1-7) edge         node[above]{$ d_{2t+1}       $} (m-1-8);
\draw[-> ,         font=\scriptsize](m-2-1) edge         node[above]{$ d_{2m}         $} (m-2-2);
\draw[-> ,         font=\scriptsize](m-2-2) edge         node[above]{$                $} (m-2-3);
\draw[-> ,         font=\scriptsize](m-2-3) edge         node[above]{$                $} (m-2-4);
\draw[-> ,         font=\scriptsize](m-2-4) edge         node[above]{$ d_{s}          $} (m-2-5);
\draw[-> ,         font=\scriptsize](m-2-5) edge         node[above]{$                $} (m-2-6);
\draw[-> ,         font=\scriptsize](m-2-6) edge         node[above]{$                $} (m-2-7);
\draw[-> ,         font=\scriptsize](m-2-7) edge         node[above]{$ d_1            $} (m-2-8);
\draw[-> ,         font=\scriptsize](m-2-8) edge         node[above]{$ \mu            $} (m-2-9);
\draw[-> ,         font=\scriptsize](m-1-1) edge         node[right]{$ h_{2m}         $} (m-2-1);
\draw[-> ,         font=\scriptsize](m-1-2) edge         node[right]{$ h_{2m-1}       $} (m-2-2);
\draw[-> ,         font=\scriptsize](m-1-4) edge         node[right]{$ h_{s}          $} (m-2-4);
\draw[-> ,         font=\scriptsize](m-1-5) edge         node[right]{$ h_{s-1}        $} (m-2-5);
\draw[-> ,         font=\scriptsize](m-1-7) edge         node[right]{$ h_1            $} (m-2-7);
\draw[-> ,         font=\scriptsize](m-1-8) edge         node[right]{$ h_0            $} (m-2-8);
\draw[-> ,         font=\scriptsize](m-1-8) edge         node[above]{$ \xi            $} (m-2-9);
\draw[-> ,         font=\scriptsize](m-2-1) edge         node[right]{$ \chi           $} (m-3-1);
\end{tikzpicture}
\end{center}
where $\xi=\mu \circ h_0$ and where all squares commute. 
We define  maps $h_s$ ($0 \leq s \leq 2m$), and will show that they are a lifting. 
\begin{align}
h_s(f^{2t+s}_i)=
\begin{cases}
            \sum_{j=0}^s      p_{i-j}f^s_j        & \text{when $s$ even} \\
(-1)^i\left(\sum_{j=0}^s(-1)^jp_{i-j}f^s_j\right) & \text{when $s$ odd }
\end{cases}
\end{align}

\begin{prop}
The  maps $h_s$ for $0 \leq s \leq s$ make the lifting diagram commutative, i.e.\ $d_s \circ h_s = h_{s-1} \circ d_{2t+s}$. 
\end{prop}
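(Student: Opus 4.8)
The plan is to verify the identity $d_s \circ h_s = h_{s-1}\circ d_{2t+s}$ directly by applying both sides to a generator $f^{2t+s}_i$ and comparing coefficients of each $f^{s-1}_j$. Since all the $p_i$ lie in the centre of $A$, they commute past the elements $\tau_\ell(-)$, $\gamma_\ell(-)$ (here $\beta_x,\beta_y,\alpha_x,\alpha_y$) appearing in the differentials, so the whole computation reduces to bookkeeping with signs and indices. The natural approach is to split into the four cases according to the parities of $s$ and of $i$, using the closed form \eqref{eq:difffin} for $d_n$ together with the explicit formula for $h_s$; in each case one expands $d_s(h_s(f^{2t+s}_i))$ as a double sum $\sum_j$ over the terms $p_{i-j}d_s(f^s_j)$, and separately expands $h_{s-1}(d_{2t+s}(f^{2t+s}_i)) = h_{s-1}\big((-1)^i(\theta_y f^{2t+s-1}_i \pm \theta_x f^{2t+s-1}_{i-1})\big)$ where $(\theta_x,\theta_y)$ is $(\beta_x,\beta_y)$ or $(\alpha_x,\alpha_y)$ depending on the parity of $2t+s$, i.e.\ of $s$.

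First I would treat $s$ even (so $2t+s$ even, differential of $\beta$-type). Here $h_s(f^{2t+s}_i)=\sum_{j=0}^s p_{i-j}f^s_j$ and $d_s$ is also of $\beta$-type, namely $d_s(f^s_j)=(-1)^j(\beta_y f^{s-1}_j + \beta_x f^{s-1}_{j-1})$. Pushing $d_s$ through and collecting the coefficient of $f^{s-1}_j$ gives $(-1)^j\beta_y\, p_{i-j} + (-1)^{j+1}\beta_x\, p_{i-j-1}$, using centrality of the $p$'s. On the other side, $d_{2t+s}(f^{2t+s}_i) = (-1)^i(\beta_y f^{2t+s-1}_i + \beta_x f^{2t+s-1}_{i-1})$, and since $s-1$ is odd, $h_{s-1}(f^{2t+s-1}_k) = (-1)^k\sum_j (-1)^j p_{k-j}f^{s-1}_j$; applying this to both summands and multiplying the central elements $\beta_x,\beta_y$ through, the coefficient of $f^{s-1}_j$ becomes $(-1)^i\big((-1)^i(-1)^j\beta_y p_{i-j} + (-1)^{i-1}(-1)^j\beta_x p_{i-1-j}\big) = (-1)^j\beta_y p_{i-j} - (-1)^j\beta_x p_{i-1-j}$, which matches. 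The case $s$ odd is entirely parallel: now $d_{2t+s}$ is of $\alpha$-type, $h_s$ carries the extra sign $(-1)^i$ and the internal sign $(-1)^j$, $d_s(f^s_j)=(-1)^j(\alpha_y f^{s-1}_j - \alpha_x f^{s-1}_{j-1})$, and $h_{s-1}$ (with $s-1$ even) has no external sign; the same collection-of-coefficients argument closes it. I should also check the boundary indices, i.e.\ that the conventions $f^n_{-1}=f^n_{n+1}=0$ (and the corresponding vanishing of $p_{i-j}$ for $i-j<0$ or $i-j>2t$, which is automatic since $\xi$ is only defined on $f^{2t}_0,\dots,f^{2t}_{2t}$, so we set $p_\ell=0$ outside that range) are respected on both sides; this is routine but worth a sentence.

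The main obstacle I expect is purely organizational rather than conceptual: keeping the three layers of signs straight — the $(-1)^i$ coming from the parity of the generator index in $d$ and in $h$, the $(-1)^j$ internal alternating sign in $h_s$ for $s$ odd, and the $(-1)^j$ from $d_s$ — and making sure the index shifts $p_{i-j} \leftrightarrow p_{i-1-j}$ line up correctly when one side has the differential "outside" and the other "inside" the lifting map. Once the coefficient of $f^{s-1}_j$ is isolated on each side in each of the four parity cases, the equality is immediate from graded-commutativity of the centre with $\beta_\bullet,\alpha_\bullet$. Finally I would remark that the case $s=0$ of the statement is exactly $\mu\circ h_0 = \xi$, which holds because $h_0(f^{2t}_i)=p_i f^0_0$ and $\mu(p_i f^0_0)=p_i=\xi(f^{2t}_i)$, so the $h_s$ genuinely form a lifting of $\xi$ as required for the Yoneda product.
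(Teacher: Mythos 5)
Your proposal is correct and takes essentially the same route as the paper: a direct case split on the parity of $s$, expanding both $d_s\circ h_s$ and $h_{s-1}\circ d_{2t+s}$ on a generator $f^{2t+s}_i$ using the closed form of the differentials and of $h_s$, and matching the coefficients of each $f^{s-1}_j$ by commuting the central elements $p_i$ past $\beta_x,\beta_y,\alpha_x,\alpha_y$. Your sign and index bookkeeping in the even case agrees with the paper's computation, and the odd case closes in the parallel way you indicate.
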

\begin{proof}
When $2t$ is fixed the proof of this result is an examination when $s$ is even and when $s$ odd, and the result follows from explicit calculations. \\
{\bf Case $s$ even}: We have 
\begin{align}
(d_s \circ h_s)(f^{2t+s}_i)&=d_s\circ\left(\sum_{j=0}^sp_{i-j}f^s_j\right)=\sum_{j=0}^sp_{i-j}d_s(f^s_j)\\
                           &=\sum_{j=0}^sp_{i-j}(-1)^j\left(\beta_yf^{s-1}_j+\beta_xf^{s-1}_{j-1}\right)\nonumber\\
(h_{s-1} \circ d_{2t+s})(f^{2t}_i)&=h_{s-1} \circ \left((-1)^i\left(\beta_yf^{2t+s-1}_i+\beta_xf^{2t+s-1}_{i-1}\right)\right)\\
                           &=\beta_yh_{s-1}(f^{2t+s-1}_i)+\beta_xh_{s-1}(f^{2t+s-1}_{i-1})\nonumber\\
                           &=\sum^{s-1}_{j=0}(-1)^j\beta_yp_{i-j}f^{s-1}_j+(-1)^{i-1}\sum^{s}_{j=0}(-1)^j\beta_xp_{i-(j+1)}f^{s-1}_j\nonumber\\
                           &=\sum_{j=0}^s(-1)^jp_{i-j}(\beta_yf^{s-1}_j+\beta_xf^{s-1}_{j-1})\nonumber
\end{align}
We observe that the expressions are equal, hence $d_s \circ h_s=h_{s-1} \circ d_{2t+s}$ and we conclude that $h_s$ is a lifting map when $s$ is even. \\
{\bf Case $s$ odd}: We calculate 
\begin{align}
(d_s \circ h_s)(f^{2t+s}_i)&=d_s\circ\left((-1)^i\sum_{j=0}^s(-1)^jp_{i-j}f^s_j\right)=\sum_{j=0}^s(-1)^jp_{i-j}d_sf^s_j\\
                           &=(-1)^i\sum_{j=0}^s(-1)^jp_{i-j}(-1)^j(\alpha_yf^{s-1}_j-\alpha_xf^{s-1}_{j-1})\nonumber\\
                           &=(-1)^i\sum_{j=0}^sp_{i-j}(\alpha_yf^{s-1}_j-\alpha_xf^{s-1}_{j-1})\nonumber\\
(h_{s-1} \circ d_{2t+s})(f^{2t}_i)&=h_{s-1} \circ \left((-1)^i\left(\alpha_yf^{2t+s-1}_i-\alpha_xf^{2t+s-1}_{i-1}\right)\right)\\
                           &=(-1)^i\sum_{j=0}^s(-1)^jp_{i-j}(\alpha_yf^{s-1}_j-\alpha_xf^{s-1}_{j-1})\nonumber
\end{align}
The expressions are equal, which proves the odd case as well, and that $h_s$ is a lifting map after all. 
\end{proof}

%

\subsection{Description of Yoneda products}
In Section \ref{subsec:cohgrps} we have described a basis for $\HH^{2t+2m}(A)$. Now we compute the Yoneda product of $\xi\in\HH^{2t}(A)$ and $\chi\in\HH^{2m}(A)$ as linear combination of $\kappa_r$ and $\eta_r$ for $0 \leq r \leq 2t+2m$:  
\begin{align}
\chi \bullet \xi = \sum^{2t + 2m}_{r=0} u_r \kappa_r + v_r \eta_r 
\end{align}
where 
\begin{align}
\kappa_r(f^{2t+2m}_i)=\delta_{ir} \cdot 1_A \qquad \eta_r(f^{2t+s}_i)=\delta_{ir} \cdot xy 
\end{align}
hence we get 
\begin{align}
( \xi \bullet \chi )(f^{2t+2m}_i) = u_i \cdot 1_A + v_i \cdot xy.  
\end{align}
By letting $u_i$ and $v_i$ denote maps 
\begin{align}
u_i: f^{2t+2m}_i \mapsto x \qquad \text{and} \qquad v_i: f^{2t+s}_i \mapsto y
\end{align}
we observe that this is isomorphic to the polynomial algebra with two generators. 

\begin{cor} Let $\xi(f_r^{2t}) = p_r\in A$ and $\chi(f_r^{2m}) = \bar{p}_r\in A$. Then
\begin{align}
\chi\circ h_{2m}(f_i^{2t+2m}) = \sum_{0\leq j\leq 2m \text{ and } 0\leq i-j\leq 2t} p_{i-j}\bar{p}_j. 
\end{align}
In particular if we let $\xi_i^{2m}$ and $\xi_j^{2t}$ denote the basis elements of Lemma \ref{lem:basisHH} then we have
\begin{align}
\xi_i^{2m}\cdot \xi_j^{2t} = \xi_{i+j}^{2t+2m}.
\end{align}
\end{cor}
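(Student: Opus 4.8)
The plan is to extract the formula for $\chi\circ h_{2m}$ directly from the explicit lifting $(h_s)$ constructed above, and then specialise to the basis elements of Lemma \ref{lem:basisHH}. First I would evaluate $h_{2m}(f_i^{2t+2m})$: since $2m$ is even, the definition of $h_s$ gives
\begin{align}
h_{2m}(f_i^{2t+2m}) = \sum_{j=0}^{2m} p_{i-j} f_j^{2m},
\end{align}
where by convention $p_\ell = 0$ unless $0\leq \ell\leq 2t$ (because $\xi$ is only defined on the generators $f_0^{2t},\dots,f_{2t}^{2t}$, equivalently $f_\ell^{2t}=0$ outside that range). Applying $\chi$, which sends $f_j^{2m}\mapsto \bar p_j$ and is $A^e$-linear, and remembering that the $p_{i-j}$ lie in the centre of $A$ by (4.5) so that they commute past everything, I get
\begin{align}
\chi\circ h_{2m}(f_i^{2t+2m}) = \sum_{j=0}^{2m} p_{i-j}\,\bar p_j = \sum_{\substack{0\leq j\leq 2m \\ 0\leq i-j\leq 2t}} p_{i-j}\,\bar p_j,
\end{align}
the second equality just recording which terms are actually nonzero. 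This is the first displayed assertion.

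Next I would specialise. Take $\xi = \xi_j^{2t}$ and $\chi = \xi_i^{2m}$ in the notation of Lemma \ref{lem:basisHH}, so $p_\ell = \delta_{\ell j}\cdot 1_A$ and $\bar p_\ell = \delta_{\ell i}\cdot 1_A$. Substituting into the formula, the summand $p_{r-\ell}\bar p_\ell$ is nonzero only when $\ell = i$ and $r-\ell = j$, i.e. only when $r = i+j$, in which case it equals $1_A$. Hence $(\xi_i^{2m}\cdot\xi_j^{2t})(f_r^{2t+2m}) = \delta_{r,i+j}\cdot 1_A = \xi_{i+j}^{2t+2m}(f_r^{2t+2m})$, which identifies the Yoneda product on the chain level with $\xi_{i+j}^{2t+2m}$ (and $i+j$ may exceed $2t+2m$, in which case the product is zero, consistent with $\xi_{i+j}^{2t+2m}=0$ by convention). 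Passing to cohomology classes gives $\xi_i^{2m}\cdot\xi_j^{2t} = \xi_{i+j}^{2t+2m}$.

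I do not expect a genuine obstacle here: the work has essentially all been done in the construction and verification of the lifting $(h_s)$ in the preceding Proposition. The only points requiring a little care are bookkeeping — the index conventions $f^n_{-1}=f^n_{n+1}=0$ and the resulting vanishing of $p_\ell$ outside $[0,2t]$ — and the use of centrality of the $p_i$ (from (4.5)) to justify pulling the scalars $p_{i-j}$ out of the $A^e$-module action when applying $\chi$. If anything is mildly delicate it is checking that the convention "$\xi_{i+j}^{2t+2m}=0$ when $i+j>2t+2m$" is consistent with the chain-level computation, but this is immediate since then there is no generator $f_{i+j}^{2t+2m}$ to hit.
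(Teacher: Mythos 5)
Your proposal is correct and follows exactly the paper's argument: apply the even-degree lifting formula $h_{2m}(f_i^{2t+2m})=\sum_{j}p_{i-j}f_j^{2m}$, compose with $\chi$ to get $\sum_j p_{i-j}\bar p_j$, and then specialise to the Kronecker-delta basis elements to read off $\xi_i^{2m}\cdot\xi_j^{2t}=\xi_{i+j}^{2t+2m}$. The extra bookkeeping you supply (vanishing conventions, centrality of the $p_i$) is exactly what the paper leaves implicit.
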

\begin{proof}
We apply the lifting formula and obtain the first part directly. If we take $\chi = \xi_{i}^{2m}$ and $\xi = \xi_j^{2t}$ then $p_i=1$ and $p_r=0$ for $r\neq i$ and similarly $\bar{p}_j=1$ and $\bar{p}_r=0$ otherwise. So we get that the image of $f_{\nu}^{2t+2m}$ is $1$ if $\nu = i+j$ and is zero otherwise. The last part follows.
\end{proof}

\subsection{Completing  the proof of Theorem \ref{R:a=2}.} \ 
To see that $R_0$ is as stated, we let the  $\xi_{0}^{2t}$ correspond to the powers of $z_0$, and we let the $\xi_{2t}^{2t}$ correspond to the powers of $z_1$ and in general $\xi_i^{2t}$ for $i$ even  corresponds to the product $z_0^{2t-i}z_1^{i}$. The formula in the Corollary may be used to see that this gives an algebra isomorphism. The rest follows from the previous results.

The Corollary is a direct consequence: The intersection of $R$ with
$\cN$ is zero, and as we have observed, any element in the span of
maps $\eta_r$ is in $\cN$. 
$\Box$


\section{Cohomology for $a\geq 3$} 
Now we study the cohomology when $a \geq 3$. Still let $q$ be an $a$th root of unity and assume the algebra is
\begin{align}
A={k\langle X,Y \rangle}/{(X^a,XY-qYX,Y^a)}.
\end{align}
We write again $x, y$ for the images of $X, Y$ in $A$.


\subsection{Differentials} 
We assume $a \geq 3$, then we can simplify the differentials defined in \ref{eq:diffeven} and \ref{eq:diffodd}. We observe that the elements in $A$ introduced in \ref{eq:tau1} to \ref{eq:gamma2} depend only on the parity of $s$ modulo $a$, and the arguments in \ref{eq:diffeven} and \ref{eq:diffodd} make only use of the cases where $s \equiv 0$ or $s \equiv 1$ modulo $a$. Using this the differentials take the following form which we will use from now: 
\begin{align}
d_{2t}  :f^{2t  }_i&\mapsto 
\begin{cases}
 \gamma_y(0)f^{2t-1}_{i} + \gamma_x(0)f^{2t-1}_{i-1}&\text{for $i$ even}\\
-\tau_y  (1)f^{2t-1}_{i} + \tau_x  (1)f^{2t-1}_{i-1}&\text{for $i$ odd}
\end{cases}\\
d_{2t+1}:f^{2t+1}_i&\mapsto
\begin{cases}
 \tau_y  (0)f^{2t}_{i} + \gamma_x(1)f^{2t}_{i-1}&\text{for $i$ even}\\
-\gamma_y(1)f^{2t}_{i} + \tau_x  (0)f^{2t}_{i-1}&\text{for $i$ odd}
\end{cases}
\end{align}
where we have replaced 
\begin{align}
\tau_1 = \tau_x \qquad \tau_2 = \tau_y \qquad \gamma_1 = \gamma_x \qquad \gamma_2 = \gamma_x 
\end{align}

\subsection{A basis for $\HH^{2t}(A)$  for $a\geq 3$}\label{subsec:basis_ageq3}
As observed the dimension of the degree $2t$ part is always $4t+2$ which is independent of $a$. We therefore expect that there should be a basis when $a\geq 3$ which is not so different from the one we had for  $a=2$.

\begin{defn}\label{def:basisgeneral}
Let $\zeta_j: P_{2t}\to A$ be the map 
\begin{align}
\zeta_j(f_i^{2t}) = 
\begin{cases}
1 & i=j \\
0 & \text{else.}
\end{cases}
\end{align}
Let $j$ be even, then define 
\begin{align}
\eta^+_j(f_i^{2t}) = 
\begin{cases}
x^{a-1}y^{a-1} & i=j \\
0              & \text{else.}
\end{cases}
\end{align}
Now let $j$ be odd, then define
\begin{align}
\eta^-_j(f_i^{2t}) = 
\begin{cases}
xy & i=j\\ 
0  & \text{ else. }
\end{cases}
\end{align}
\end{defn}
\begin{lem} We fix a degree $2t$.\\ 
(a) \ The classes of the elements $\zeta_i$ and $\eta_j^{\pm}$ as defined above form a basis of $\HH^{2t}(A)$.\\
(b) \ The classes of the elements $\eta_j^{\pm}$ give nilpotent elements of
 $\HH^*(A)$.
\end{lem}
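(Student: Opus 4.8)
The plan is to mimic the argument of Lemma~\ref{lem:basisHH} but with the new representatives $\zeta_j$, $\eta_j^+$, $\eta_j^-$. Part~(b) is immediate: each $\eta_j^{\pm}$ has image contained in the radical of $A$ (indeed in the socle, spanned by $x^{a-1}y^{a-1}$, respectively $xy\in\mathrm{rad}(A)$), so Proposition~\ref{nilpotent} applies verbatim and gives that their classes are nilpotent in $\HH^*(A)$. For part~(a) I would first count: the $\zeta_j$ give $2t+1$ maps, the $\eta_j^+$ one for each even $j\in\{0,\dots,2t\}$, i.e. $t+1$ maps, and the $\eta_j^-$ one for each odd $j$, i.e. $t$ maps; that is $4t+2$ elements, matching $\dim_k\HH^{2t}(A)$ from Theorem~\ref{thm:dim} and Corollary~\ref{cor:dim}. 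So it suffices to check that all these maps are cocycles, i.e. lie in $\ker d_{2t+1}^*$, and that they are linearly independent modulo $\mathrm{im}(d_{2t}^*)$.

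For the cocycle condition I would precompose with $d_{2t+1}$ using the simplified differentials of Section on Differentials: $d_{2t+1}(f_i^{2t+1})$ is a combination of $\tau_y(0)f_i^{2t}$, $\gamma_x(1)f_{i-1}^{2t}$ for $i$ even and $-\gamma_y(1)f_i^{2t}$, $\tau_x(0)f_{i-1}^{2t}$ for $i$ odd. Applying $\zeta_j$ and viewing $A$ as a left $A^e$-module, one needs $\tau_x(s)\cdot 1_A=\tau_y(s)\cdot 1_A=0$ (clear from \eqref{eq:tau1}, \eqref{eq:tau2}) and $\gamma_x(s)\cdot 1_A=\sum_j q^{js}x^{a-1-j}x^j=K_a(q^s)\,x^{a-1}$, similarly $\gamma_y(s)\cdot 1_A=K_a(q^s)\,y^{a-1}$; since $s\in\{0,1\}$ here and $q$ is a primitive $a$th root of unity, $K_a(q)=0$ while $K_a(1)=a\neq 0$ in $k$. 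So $\zeta_j\circ d_{2t+1}$ is not obviously zero — this is where care is needed: the $\gamma_x(1),\gamma_y(1)$ terms carry $K_a(q)=0$, so in fact all four contributions vanish and $\zeta_j$ is a cocycle. For $\eta_j^{\pm}$ the images land in the socle, which is annihilated by $x\ot1$, $1\ot x$, $y\ot1$, $1\ot y$, hence by all the $\tau$'s and $\gamma$'s, so these are cocycles too.

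For linear independence modulo coboundaries, suppose $\sum_r (c_r\zeta_r + (\text{even }r)\,d_r^+\eta_r^+ + (\text{odd }r)\,d_r^-\eta_r^-) = \rho\circ d_{2t}$ for some $\rho:P_{2t-1}\to A$. Writing $\rho(f_i^{2t-1})=p_i\in A$ and expanding $\rho\circ d_{2t}(f_i^{2t})$ using the $d_{2t}$ formula, one sees that for $i$ even the value is $\gamma_y(0)p_i+\gamma_x(0)p_{i-1} = a\,(\text{socle part of }p_i\text{-type terms})$ — more precisely it lies in $x^{a-1}A + Ay^{a-1}$, and for $i$ odd it lies in $xA+Ay$ but with the constant ($1_A$-) coefficient necessarily zero. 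Comparing the $1_A$-coefficient on both sides forces all $c_r=0$; then comparing the $x^{a-1}y^{a-1}$-coefficient (for $i$ even) and the $xy$-coefficient (for $i$ odd), and using that a coboundary's component there is constrained (the socle generator $x^{a-1}y^{a-1}$ cannot appear as $\gamma_y(0)p_i+\gamma_x(0)p_{i-1}$ unless it is already killed, and $xy$ appears in $d_{2t}$-image only in a controlled way), forces all $d_r^{\pm}=0$. The main obstacle is precisely this last bookkeeping: one must identify exactly which elements of $A$ occur as components of $\rho\circ d_{2t}(f_i^{2t})$ and verify that the socle-type values $x^{a-1}y^{a-1}$ and $xy$ genuinely cannot be hit (equivalently, that $\eta_j^{\pm}$ are not coboundaries), which requires tracking the action of $\gamma_x(0),\gamma_y(0)$ on a general element $p_i=\sum a_{uv}x^uy^v$ and checking the relevant coefficient vanishes. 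Once that is done, all $4t+2$ classes are independent in a space of dimension $4t+2$, so they form a basis and (a) follows; (b) is Proposition~\ref{nilpotent} as noted.
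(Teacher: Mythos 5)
Your overall strategy is the same as the paper's: count the $4t+2$ maps, verify they are cocycles, and show linear independence modulo $\im(d_{2t}^*)$, with part~(b) following from Proposition~\ref{nilpotent}. But there are two genuine problems. First, your cocycle argument for $\eta_j^-$ is wrong as stated: for $a\geq 3$ the element $xy$ is \emph{not} in the socle of $A$ (the socle is spanned by $x^{a-1}y^{a-1}$), and $xy$ is not annihilated by all the $\tau$'s --- for instance $\tau_y(0)\cdot xy = xy^2 - yxy = (1-q^{-1})xy^2 \neq 0$ when $a\geq 3$. The conclusion survives only because of a parity bookkeeping you have skipped: $\eta_j^-$ is supported on an odd index $j$, so in $\eta_j^-(d_{2t+1}(f_i^{2t+1}))$ the value $xy$ is only ever multiplied by $\gamma_x(1)$ (when $i=j+1$ is even) or by $-\gamma_y(1)$ (when $i=j$ is odd), never by a $\tau$; and these $\gamma$'s do kill $xy$, since every summand of $\gamma_x(1)\cdot xy$ is a scalar multiple of $x^a y=0$ and every summand of $\gamma_y(1)\cdot xy$ is a scalar multiple of $xy^a=0$. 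The socle argument is valid for $\eta_j^+$ only.

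Second, the linear-independence step is not actually carried out: you correctly identify that one must show the coefficients of $1_A$, of $x^{a-1}y^{a-1}$ (at even indices) and of $xy$ (at odd indices) in $\rho\circ d_{2t}(f_i^{2t})$ all vanish, but you label this ``the main obstacle'' and leave it open --- and this is where the substance of the proof lies. The paper closes it with three short computations: the constant term vanishes because $\gamma_x(0),\gamma_y(0)$ (for $i$ even) and $\tau_x(1),\tau_y(1)$ (for $i$ odd) lie in the radical of $A^e$; the coefficient of $x^{a-1}y^{a-1}$ could only arise from $\gamma_y(0)x^{a-1}$ or $\gamma_x(0)y^{a-1}$, and, e.g., $\gamma_y(0)x^{a-1}=\sum_{j=0}^{a-1}y^jx^{a-1}y^{a-1-j}=\bigl(\sum_{j=0}^{a-1}q^{j}\bigr)x^{a-1}y^{a-1}=0$; and the coefficient of $xy$ could only arise from $\tau_y(1)\cdot x = xy-qyx=0$ or $\tau_x(1)\cdot y = qyx-xy=0$ (computed in the $A^e$-module $A$). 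Until these verifications are supplied, the independence claim --- equivalently, the claim that the $\eta_j^{\pm}$ are not coboundaries --- is unproved.
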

\begin{proof} Part (b) will follow again from Proposition \ref{nilpotent}. We prove
now part (a). 
These are in total $4t+2$ maps, so we only 
have to show that 
the maps lie in the kernel of $d_{2t+1}^*$, and 
that they are linearly independent modulo the image of $d_{2t}^*$.

\noindent (1) \ 
Let $\xi$ be one these maps. We write $\xi(f_i^{2t}) = p_i \in A$, so that $p_i$ is either $0$ or $1$ or one of $x^{a-1}y^{a-1}$ or $xy$ depending on the parity of $i$. We need to check that $\xi(d_{2t+1}(f_i^{2t}))=0$. 
\begin{itemize}
\item[(a)] Assume $i$ is even, then this is equal to
\begin{align}
\xi(\tau_y(0)f_i^{2t} + \gamma_x(1)f_{i-1}^{2t}) = \tau_y(0)p_i + \gamma_x(1)p_{i-1}.
\end{align}
This has to be calculated in $A$ which is viewed as an $A^e$ left module. We have 
\begin{align}
\tau_y(0)p_i = p_iy - yp_i
\end{align}
This is zero if $p_i=1$. Otherwise since $i$ is even we only need to consider $p_i=x^{a-1}y^{a-1}$ and then $p_iy=0$ and $yp_i=0$. Next, if $p_{i-1}=1$ then 
\begin{align}
\gamma_x(1)p_{i-1} = \sum_{j=0}^{a-1} q^jx^{a-1-j}\cdot 1 \cdot x^{j} = (\sum_{j=0}^{a-1}q^j) x^{a-1}
\end{align}
and this is zero, note that  $1 + q + \ldots + q^{a-1} = 0$ since $q$ is an $a$-th root of $1$. Otherwise $p_{i-1}= xy$ and then 
\begin{align}
\gamma_x(1)p_{i-1} = \sum_{j=0}^{a-1} q^j(x^{a-1-j}xyx^j)
\end{align}
and this is a scalar multiple of $x^ay$ and hence is zero. 
\item[(b)] Let $i$ be odd, we get
\begin{align}
\xi(-\gamma_y(1)f_i^{2t} + \tau_x(0)f_{i-1}^{2t}) = -\gamma_y(1)p_i + \tau_x(0)p_{i-1}.
\end{align}
By calculations similar to part (a) we see that this is zero in all cases to be considered.
\end{itemize}

\noindent (2) \ 
 We consider a linear combination of the above elements and assume that it lies in the image of $d_{2t}^*$. Explicitly let 
\begin{align}
\sum_{j=0}^{2t} c_j\zeta_j + \sum_{j \text{ even} } s_j^+ \eta_j^+ + \sum_{j \text{ odd}} s_j^-\eta_j^- = \xi\circ d_{2t}
\end{align}
where $\xi: P_{2t-1}\to A$, with $c_j$ and $s_j^{\pm}$ in $k$. We must show that this is only possible, as $\xi$ varies, with all $c_j$ and $s_{j}^{\pm}$ equal to zero.
\begin{itemize}
\item[(a)] Apply the LHS to $f_i^{2t}$ with $i$ even, this gives 
\begin{align}
c_i + s_i^+(x^{a-1}y^{a-1}).
\end{align}
On the other hand,
\begin{align}
\xi\circ d_{2t}(f_i^{2t}) = \gamma_y(0)\xi(f_i^{2t-1}) + \gamma_x(0)\xi(f_{i-1}^{2t-1}).\label{eq:star}
\end{align}
This is an element in $A$ viewed as an $A^e$ left module. For any element $z\in A$, $\gamma_x(0)z$ or $\gamma_y(0)z$ can never have a constant term since $\gamma_x(0)$ and $\gamma_y(0)$ are in the radical of $A^e$. Hence the Equation (\ref{eq:star}) does never have a non-zero constant term and $c_i=0$. 

We claim that we also cannot get a term which is a multiple of $x^{a-1}y^{a-1}$. Namely if so this could only come from either $\gamma_y(0) x^{a-1}$ or from $\gamma_x(0)y^{a-1}$. Now, 
\begin{align}
\gamma_y(0)x^{a-1} = \sum_{j=0}^{a-1} y^jx^{a-1}y^{a-1-j} = \sum_{j=0}^{a-1} (q^{-1})^{j(a-1)} x^{a-1}y^{a-1} = 0
\end{align}
since $\sum_{j=1}^{a-1}q^j = 0$. Hence $s_i^+=0$. 
\item[(b)] Apply the LHS to  $f_i^{2t}$ with $i$ odd, this gives
\begin{align}
c_i + s_i^-(xy).
\end{align}
On the other hand, 
\begin{align}
\xi\circ d_{2t}(f_i^{2t}) = -\tau_y(1)\xi(f_i^{2t-1}) + \tau_x(1)\xi(f_{i-1}^{2t-1}).\label{eq:doublestar}
\end{align}
As before, since $\tau_y(1)$ and $\tau_x(1)$ are in the radical of $A^e$, this cannot have non-zero constant terms. Hence $c_i=0$.

We must check that we cannot get $xy$. If $xy$ should occur in $\tau_y(1)$ this can only come from $\tau_y(1)x$ but this is equal to $xy-qyx = 0$. Similarly $\tau_x(1)y=0$ and we do not get $xy$. Hence $s_i^-=0$. 
\end{itemize}
We have proved that the $4t+2$ maps are linearly independent modulo the image of $d_{2t}^*$. By dimensions, they are a basis of $\HH^{2t}(A)$.
\end{proof}

The aim of this section is to prove the following.

\begin{thm}\label{R:a>2} Let $k$ be a field, $a \geq 3$ an integer, $q \in k$ a primitive $a$th root of unity, and $A$ the quantum complete intersection $k\langle X,Y \rangle/(X^a,XY-qYX,Y^a)$. Assume 
$$R = k-{\rm Sp} \{ \zeta_i^{2t}: \ t \geq 0  \ \mbox{ and } 0\leq i\leq 2t\}.
$$ 
Then $R$ is a subalgebra of $\HH^{2*}(A)$. It is $\mathbb{Z}_2$-graded with
$R_0= \langle \zeta_i^{2t}: i \mbox{ even}\rangle $ and
$R_1= \langle \zeta_i^{2t}: i \mbox{ odd} \rangle$. Moreover
\begin{align}
\zeta_{l}^{2m}\cdot \zeta_{r}^{2t} = \left\{\begin{array}{ll} 0 & l, r \mbox{ odd} 
\cr \zeta_{l+r}^{2m+2t}& \mbox{otherwise} 
\end{array}
\right.
\end{align}
\end{thm}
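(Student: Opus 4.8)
The plan is to mimic the structure of the $a=2$ case: build an explicit lifting of a degree-$2t$ cocycle $\xi$ along the resolution $\mathbb{P}$, read off the Yoneda product from $\chi\circ h_{2m}$, and then specialize to the basis elements $\zeta_i^{2t}$. First I would take representatives $\xi:P_{2t}\to A$ and $\chi:P_{2m}\to A$ with $\xi(f^{2t}_i)=p_i$ and $\chi(f^{2m}_i)=\bar p_i$ lying in the centre of $A$ (by the computation of $\HH^0$, a central element is $k$-spanned by $1$ and $x^{a-1}y^{a-1}$; for the basis elements $\zeta_i^{2t}$ we have $p_i\in\{0,1\}$). Then I would propose a lifting family $h_s:P_{2t+s}\to P_s$ of the form
\begin{align}
h_s(f^{2t+s}_i) = \sum_{j=0}^{s} c^{(s)}_{i,j}\, p_{i-j}\, f^s_j
\end{align}
where the scalars $c^{(s)}_{i,j}$ are appropriate $q$-powers (signs $\times$ powers of $q$ depending on $i,j,s$), reducing to $(-1)^{ij}$-type signs when $q=-1$. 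The scalars must be pinned down so that $d_s\circ h_s = h_{s-1}\circ d_{2t+s}$; this is the analogue of the Proposition proved for $a=2$, and I expect it to split into four cases according to the parities of $s$ and $i$, matching against the four branches of \eqref{eq:diffeven}--\eqref{eq:diffodd} (in their simplified $a\geq 3$ form). The key point making this work is that the $p_{i-j}$ are central, so the $\tau$'s and $\gamma$'s can be moved past them, and central elements of $A$ other than scalars are annihilated by every $\tau_\bullet(s)$ and $\gamma_\bullet(s)$ (these lie in the radical of $A^e$ and $x^{a-1}y^{a-1}$ is in the socle), so in fact only the scalar parts of the $p_i$ contribute to $h_s$ modulo lower terms.

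Once the lifting is verified, the Yoneda product $\zeta_l^{2m}\bullet\zeta_r^{2t}$ is the class of $\zeta_l^{2m}\circ h_{2m}$, and evaluating on $f^{2t+2m}_i$ gives $\sum_{0\le j\le 2m,\ 0\le i-j\le 2t} c^{(2m)}_{i,j}\, p_{i-j}\bar p_j$, which for the basis elements collapses to a single term: it is (a scalar times) $1_A$ precisely when $i=l+r$, and otherwise $0$. So up to a nonzero scalar the product is $\zeta_{l+r}^{2m+2t}$ modulo the span of the $\eta^{\pm}$-type maps. I would then argue that this scalar is actually $1$: since $R$ is meant to be a genuine subalgebra and the resolution differentials in the simplified form have coefficients that are sums $\sum_{j=0}^{a-1}q^{j(\ast)}$ which vanish unless the exponent is $\equiv 0$, the surviving coefficient $c^{(2m)}_{l+r-\text{shift}}$ can be normalized to $1$ by the standard argument that the class is independent of the choice of lifting — alternatively, one fixes normalization at the start by choosing $h_s$ so that $c^{(s)}_{j,0}=1$. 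The vanishing statement $\zeta_l^{2m}\cdot\zeta_r^{2t}=0$ for $l,r$ both odd must come from a different mechanism: here the single surviving term involves evaluating a $\tau$ or $\gamma$ on $1_A$ in a position forced by both indices being odd, and I expect it to produce an element of $\mathrm{rad}(A)$ (indeed something like a multiple of $xy$ coming from two odd slots combining), hence zero in $\HH^{2m+2t}$ as a constant-term coefficient but landing in the $\eta^-$ part — so the $\zeta$-coefficient genuinely vanishes. Concretely, when $l,r$ are odd, $l+r$ is even, and the relevant composite picks up a factor of the form $\tau_x(0)$ or $\gamma$ applied to a non-scalar central element, which is $0$.

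The $\mathbb{Z}_2$-grading is then formal: the product rule shows $R_0\cdot R_0\subseteq R_0$, $R_0\cdot R_1\subseteq R_1$, $R_1\cdot R_1 = 0\subseteq R_0$, and $R$ is closed under multiplication, hence a subalgebra. That $R$ is spanned (not just generated) by the $\zeta_i^{2t}$ is immediate from the product formula together with the base case $\HH^0$, and linear independence in each degree is the content of the preceding Lemma. The main obstacle I anticipate is the bookkeeping in step one: getting the exact $q$-power scalars $c^{(s)}_{i,j}$ so that $d_s\circ h_s=h_{s-1}\circ d_{2t+s}$ holds on the nose, since for $a\geq 3$ the differentials are no longer just signs and one has to track the arguments of $\tau_x,\tau_y,\gamma_x,\gamma_y$ (which depend on $i,t$ through the expressions $\tfrac{ai}{2}$, $\tfrac{2at-ai}{2}$, etc.) carefully through the composition — in particular one must use the simplification that these elements depend only on the argument modulo $a$ and only the residues $0,1$ occur. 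Everything downstream (reading off the product, the vanishing, the grading) is then a direct specialization.
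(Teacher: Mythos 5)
There is a genuine gap, and it sits exactly where you flag your ``main obstacle'': the ansatz $h_s(f_i^{2t+s})=\sum_j c^{(s)}_{i,j}p_{i-j}f_j^s$ with \emph{scalar} coefficients $c^{(s)}_{i,j}$ cannot satisfy $d_s\circ h_s=h_{s-1}\circ d_{2t+s}$ when $a\geq 3$. The reason is a degree mismatch: the differentials alternate between the elements $\tau_x,\tau_y$ (tensor-degree $1$ in $A^e$) and $\gamma_x,\gamma_y$ (tensor-degree $a-1$), and in the commutation square a $\tau$ hitting one side must be matched against a $\gamma$ on the other (already for $s=1$ one needs a coefficient $\varepsilon$ with $\varepsilon\cdot\tau_x(0)=\gamma_x(1)$ up to sign). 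No scalar, whatever power of $q$, can convert a degree-$1$ element into a degree-$(a-1)$ element. The paper's lifting therefore uses coefficients $\omega_\pm(j),\varepsilon_x(j),\varepsilon_y(j)$ built from elements $\beta_x(s),\beta_y(s)=\sum_{i=0}^{a-2}c_iq^{si}(\cdots)$ of tensor-degree $a-2$, together with a list of thirteen identities such as $\beta_y(1)\tau_y(1)=\gamma_y(2)$ and $\beta_x(-1)\gamma_y(2)=\gamma_y(0)\beta_x(0)$ that make the squares commute. This is the technical heart of the proof and is absent from your proposal; it only degenerates to your sign-only picture when $a=2$, where $a-2=0$ and the $\beta$'s are scalars.

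Two further points would still fail even if the lifting were granted. First, your mechanism for $\zeta_l\cdot\zeta_r=0$ ($l,r$ odd) --- a surviving term ``landing in the $\eta^-$ part'' --- would only give vanishing modulo the nilpotent ideal, whereas the theorem asserts the product is zero on the nose (and this is needed for $R$, the span of the $\zeta$'s alone, to be closed under multiplication). In the paper the surviving coefficient is $\omega_+(l)\cdot 1_A=\beta_x(-1)\beta_y(1)\cdot 1_A$, which is a multiple of $y^{a-2}$ with scalar $\sum_{i=0}^{a-2}c_iq^i$, and a separate lemma shows this scalar is exactly $0$; the product genuinely vanishes in $A$. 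Second, the surviving scalar in the non-vanishing case cannot be ``normalized to $1$ by independence of the lifting'': the Yoneda product is a fixed cohomology class, so that scalar must be computed, not chosen. It comes out as $1$ in the paper because $\omega_+(j)=1$ for $j$ even and $\omega_-(j)=1$ identically, which is part of what the explicit lifting formulae deliver.
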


As for the case $a=2$ we can see:

\begin{cor} The even part $R_0$ 
of $R$  is isomorphic to the polynomial ring in two variables. 
\end{cor}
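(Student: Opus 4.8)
The plan is to prove Theorem \ref{R:a>2} by first establishing that the span $R$ is closed under the Yoneda product (hence a subalgebra), then reading off the $\mathbb{Z}_2$-grading, and finally deriving the corollary on $R_0$. The central computational tool is a lifting of each basis map $\zeta_r^{2t}$ along the resolution $\mathbb{P}$, entirely parallel to the $a=2$ case treated in Section \ref{subsec:cohgrps}.

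First I would construct, for a representative $\xi: P_{2t}\to A$ with $\xi(f^{2t}_i)=p_i$ (where the $p_i$ lie in the centre of $A$, as follows from the basis Lemma and the fact that $\zeta_i$ has image in $k\cdot 1_A$), an explicit chain map $(h_s)_{0\leq s\leq 2m}$ with $h_s:P_{2t+s}\to P_s$ lifting $\xi$. Guided by the $a=2$ formula, the natural guess is
\begin{align}
h_s(f^{2t+s}_i)=
\begin{cases}
\sum_{j=0}^{s} p_{i-j}\,f^s_j & s \text{ even}\\
\sum_{j=0}^{s} (-1)^{?}\,p_{i-j}\,f^s_j & s \text{ odd,}
\end{cases}
\end{align}
where for $a\geq 3$ the sign twists of the $a=2$ case are replaced by appropriate powers of $q$ dictated by the simplified differentials of Section 5.1; the exact $q$-powers are pinned down by forcing $d_s\circ h_s = h_{s-1}\circ d_{2t+s}$, checked separately for $s$ even and $s$ odd exactly as in the Proposition preceding Section \ref{subsec:cohgrps}. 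Since each $p_r$ is central, $p_r$ commutes past all the $\tau$'s and $\gamma$'s, which is what makes the verification go through. Once the lifting is in hand, the Yoneda product $\chi\bullet\xi$ is represented by $\chi\circ h_{2m}$, giving
\begin{align}
(\chi\circ h_{2m})(f^{2t+2m}_i) = \sum_{\substack{0\leq j\leq 2m\\ 0\leq i-j\leq 2t}} p_{i-j}\,\bar{p}_j
\end{align}
(up to a $q$-power that will turn out to be harmless on the relevant classes). Specializing $\chi=\zeta_l^{2m}$ and $\xi=\zeta_r^{2t}$, only the term $j=l$, $i-j=r$ survives, so $\zeta_l^{2m}\bullet\zeta_r^{2t}$ is represented by the map sending $f^{2m+2t}_i$ to $\delta_{i,l+r}\cdot 1_A$ — i.e.\ exactly $\zeta_{l+r}^{2m+2t}$ — whenever that index makes sense. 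This proves closure of $R$ and the multiplication formula in the ``otherwise'' case.

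The one genuinely new phenomenon for $a\geq 3$, absent when $a=2$, is the vanishing $\zeta_l^{2m}\cdot\zeta_r^{2t}=0$ when both $l$ and $r$ are odd. Here the product $\zeta_l^{2m}\circ h_{2m}$ applied to $f^{2m+2t}_{l+r}$ still picks out $p_{r}\bar{p}_l = 1$, but $l+r$ is even, and the subtlety is that the class of the resulting map $f^{2m+2t}_i\mapsto \delta_{i,l+r}\cdot 1_A$ with $l+r$ even need not be $\zeta_{l+r}^{2m+2t}$ on the nose — one must check whether the relevant $q$-power coefficient coming from $h$ in the odd-odd interaction is a nonzero scalar or vanishes, and whether the resulting cocycle is a coboundary. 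Concretely: when both indices are odd, the lifting $h_s$ for odd $s$ carries nontrivial $q$-twists, and the coefficient attached to the surviving term is a Gauss-type sum $K_a(q^{\ast})$ of the kind appearing in Section \ref{sec:dimHH}, which vanishes precisely because $q$ is a \emph{primitive} $a$th root of unity. So the odd-odd product is represented by the zero map. I expect \emph{this} to be the main obstacle: tracking the $q$-powers in the odd-degree part of the lifting carefully enough to see the Gauss sum appear and vanish, as opposed to the bookkeeping in the $a=2$ case where everything was $\pm 1$.

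Granting the multiplication table, the $\mathbb{Z}_2$-grading is immediate: parity of the lower index is additive under the product (since $(l+r)\bmod 2$ is what it is), so $R_0\cdot R_0\subseteq R_0$, $R_0\cdot R_1\subseteq R_1$, and $R_1\cdot R_1=0\subseteq R_0$, with $R_i$ spanned by the $\zeta_j^{2t}$ of the stated parity. For the Corollary, restrict to $R_0$: it is spanned by $\zeta_i^{2t}$ with $i$ even, $0\leq i\leq 2t$, and the product formula $\zeta_i^{2t}\cdot\zeta_j^{2m}=\zeta_{i+j}^{2t+2m}$ (both indices even, so never the vanishing case) shows $R_0$ is generated as a $k$-algebra by $\zeta_0^2$ and $\zeta_2^2$; sending $\zeta_0^2\mapsto z_0$, $\zeta_2^2\mapsto z_1$, and in general $\zeta_i^{2t}\mapsto z_0^{2t-i}z_1^{i/2}$ (re-indexing as in the $a=2$ proof), one checks the map is a bijective algebra homomorphism onto $k[z_0,z_1]$, using that the monomials $z_0^{2t-i}z_1^{i/2}$ for $0\leq i\leq 2t$, $i$ even, are exactly the degree-$2t$-weighted monomials and are linearly independent by the basis Lemma. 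Hence $R_0\cong k[z_0,z_1]$, completing the proof.
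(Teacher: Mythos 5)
Your overall strategy --- lift $\xi$ along $\mathbb{P}$, represent $\chi\bullet\xi$ by $\chi\circ h_{2m}$, read off the multiplication table, and then identify $R_0$ with $k[z_0,z_1]$ by counting monomials --- is the same as the paper's, and your final step (the bijection between the even-index basis elements $\zeta_i^{2t}$ and the degree-$t$ monomials in $z_0,z_1$) is correct. But there is a genuine gap in the key technical step: your ansatz that the lifting for $a\geq 3$ is obtained from the $a=2$ formula by replacing signs with ``appropriate powers of $q$'' cannot work. Compare coefficients of $f_j^{s-1}$ in $d_s\circ h_s(f_i^{2t+s})$ and in $h_{s-1}\circ d_{2t+s}(f_i^{2t+s})$: with purely scalar coefficients you are forced into identities in $A^e$ of the shape $\lambda\,\tau_y(1)=\mu\,\gamma_y(0)$, which are impossible for $a\geq 3$ since $\tau_y(1)=(1\otimes y)-q(y\otimes 1)$ and $\gamma_y(0)=\sum_j y^j\otimes y^{a-1-j}$ live in different graded pieces of $A^e$. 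The paper's lifting (Definition \ref{a>2:hs}) therefore inserts genuine non-scalar correction elements $\omega_+(j)=\beta_x(-1)\beta_y(1)$ and $\varepsilon_x(j)=-\beta_x(1)$, $\varepsilon_y(j)=-\beta_y(0)$, built from $\beta_x(s)=\sum_{i=0}^{a-2}c_iq^{si}(x^{a-2-i}\otimes x^i)$ and its $y$-analogue, and verifying $d_s\circ h_s=h_{s-1}\circ d_{2t+s}$ then rests on the thirteen identities of Lemma \ref{lem:rel}. This construction is the real content of the argument and is entirely missing from your proposal.

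Relatedly, you misidentify the mechanism behind the odd--odd vanishing in Theorem \ref{R:a>2}: it is not a coefficient $K_a(q^{\ast})$ appearing in the surviving term of $\chi\circ h_{2m}$, but the fact that the correction element $\omega_+(l)$ acts as zero on $1_A$, via $\beta_y(1)\cdot 1_A=\bigl(\sum_{i=0}^{a-2}c_iq^i\bigr)y^{a-2}=0$, where the scalar identity $\sum_{i=0}^{a-2}c_iq^i=0$ is proved by a telescoping argument using $q\neq -1$. For the Corollary itself this subtlety is immaterial --- when both indices are even only the terms with $\omega_+(j)=1$ contribute, and $\zeta_l^{2m}\cdot\zeta_r^{2t}=\zeta_{l+r}^{2m+2t}$ --- but you still need the lifting to exist as a chain map before you may use any of its terms, so the gap above must be filled.
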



\begin{cor} Assume $A$ is as in the Theorem, and let $\cN$ be the
largest homogeneous nilpotent ideal of $\HH^{2*}(A)$. Then $\HH^{2*}(A)/\cN$ is
isomorphic to $R_0$.
\end{cor}

\medskip

\subsection{Lifting} 
We compute the Yoneda product $\chi \bullet \xi$ where $\chi, \xi$ are $k$-linear combinations of maps $\zeta_j$ as in Definition \ref{def:basisgeneral}.  

For $\xi$ in the span of the $\zeta_j$, the values of $\xi$ are scalars and therefore they commute with elements of $A^e$. 
Luckily, we are only  interested in the even Hochschild cohomology modulo
homogeneous nilpotent elements.
%

Similar as for the case where $a=2$ we use liftings along the minimal projective resolution to define the Yoneda products in the cohomology ring. Let $\xi: P_{2t}\to A$ where 
\begin{align}
\xi(f_i^{2t}) := p_i
\end{align}
and we assume $p_i$ is a scalar multiple of $1$, for all $i$. We restricted to the subring of this kind. %
Consequently the values $p_i$ commute with all elements in $A^e$. As usual we set $p_i=0$ if $i> 2t$ or if $i< 0$. 

The map $h_0: P_{2t} \to P_0$ is defined by
\begin{align}
h_0(f_i^{2t}):= p_i f_0^0 \qquad (0\leq i\leq 2t).  
\end{align}
Moreover, we search explicit formulae for maps 
\begin{align}
h_s: P_{2t+s} \to P_s. 
\end{align}
For $s\geq 1$ we should have
\begin{align}
h_{s-1}\circ d_{2t+s} = d_s\circ h_{s}. 
\end{align}

\subsubsection{Some formulae in $A^e$}
In order to define these lifting maps $h_s$ for $s>0$ we establish some formulae in $A^e$. Let 
\begin{align}
c_i = 1 + q + \ldots + q^i \qquad \text{ for $0\leq i\leq a-2$}. 
\end{align}

\begin{defn} For an integer $s$ we define
\begin{align}
\beta_x(s)  &=  \sum_{i=0}^{a-2} c_iq^{si}(x^{a-2-i}\otimes x^i) \\
\beta_y(s)  &=  \sum_{i=0}^{a-2} c_iq^{si} (y^i\otimes y^{a-2-i})
\end{align}
\end{defn}

Recall now the elements in $A^e$ which occur in the definition of the differentials: 
\begin{align}
\gamma_y(s) = \sum_{j=0}^{a-1}  q^{js}(y^j\otimes y^{a-1-j})\\
\gamma_x(s) = \sum_{j=0}^{a-1} q^{js}(x^{a-1-j}\otimes x^j)
\end{align}
At the end we only need $s=0$ and $s=1$
\begin{align}
&\tau_y(1) = (1\otimes y) - q(y\otimes 1)\\ 
&\tau_x(1) = q(1\otimes x) - (x\otimes 1)\\
&\tau_y(0) = (1\otimes y) - (y\otimes 1) \\ 
&\tau_x(0) = (1\otimes x) - (x\otimes 1). 
\end{align}

%
%
%
%
%
%
With this notation, we will define maps $h_s: P_{2t+s}\to P_s$, defined on the generators $f_i^{2t+s}$ of the free $A^e$ module $P_{2t+s}$, and we will show
below that they lift $\xi$: 
\begin{defn}\label{a>2:hs}
\mbox{}\newline
\noindent Assume $s$ is even. For an integer $i$ we define the following elements in the algebra,
\begin{align}
&\omega_+(j) = 
\begin{cases}
\beta_x(-1)\beta_y(1) & \text {$j$ odd} \\
1 & \text {$j$ even}
\end{cases}
\qquad
&\omega_-(j) = 1 
\end{align}
We will show that  a lifting formula is given by
\begin{align}
h_s(f_i^{2t+s}):= 
\begin{cases}
\sum_{j=0}^s p_{i-j}\omega_+(j) f_j^s  & \text{$i$ even} \\
\sum_{j=0}^s p_{i-j}\omega_-(j) f_j^s  & \text{$i$ odd. }  
\end{cases}
\end{align}

\noindent
Now assume $s$ is odd. Here we need two parameters in $A^e$,  one for $x$ and one for $y$. We set 
\begin{align}
\varepsilon_x(j) = 
\begin{cases}
-\beta_x(1) & \text{$j$ odd} \\
1           & \text{$j$ even}
\end{cases}
\qquad
\varepsilon_y(j) = 
\begin{cases}
1           & \text{$j$ odd} \\
-\beta_y(0) & \text{$j$ even.}
\end{cases}
\end{align}
We will show that a lifting  formula is given by
\begin{align}
h_s(f_i^{2t+s}):= 
\begin{cases}
\sum_{j=0}^s p_{i-j}\varepsilon_x(j) f_j^s & \text{$i$ even}\\
\sum_{j=0}^s p_{i-j}\varepsilon_y(j) f_j^s & \text{$i$ odd. } 
\end{cases}
%
\end{align}
\end{defn}
\begin{lem}\label{lem:rel}
We have that the following relations hold: 
\begin{align}
\beta_y(1)\tau_y(1)   &= \gamma_y(2)           \tag{a} \label{rel:a}\\
\beta_x(-1)\gamma_y(2)&= \gamma_y(0)\beta_x(0) \tag{b} \label{rel:b}\\
\beta_x(-1)\beta_y(1) &= \beta_y(-1)\beta_x(1) \tag{c} \label{rel:c}\\
\beta_x(1)\tau_x(1)   &=-\gamma_x(2)           \tag{d} \label{rel:d}\\
\beta_y(-1)\gamma_x(2)&= \gamma_x(0)\beta_y(0) \tag{e} \label{rel:e}\\
\beta_y(0)\tau_y(0)   &= \gamma_y(1)           \tag{f} \label{rel:f}\\
\beta_x(0)\tau_x(0)   &=-\gamma_x(1)           \tag{g} \label{rel:g}\\
\tau_y(1)\beta_y(0)   &= \gamma_y(0)           \tag{h} \label{rel:h}\\
\tau_x(0)\beta_x(-1)  &=-\gamma_x(-1)          \tag{i} \label{rel:i}\\
\gamma_x(-1)\beta_y(1)&= \beta_y(0)\gamma_x(1) \tag{j} \label{rel:j}\\
\tau_y(0)\beta_y(-1)  &= \gamma_y(-1)          \tag{k} \label{rel:k}\\
\tau_x(1)\beta_x(0)   &= -\gamma_x(0)          \tag{l} \label{rel:l}\\
\beta_x(0)\gamma_y(1) &= \gamma_y(-1)\beta_x(1)\tag{m} \label{rel:m}
\end{align}
\end{lem}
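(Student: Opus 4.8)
The plan is to verify each of the thirteen identities (\ref{rel:a})--(\ref{rel:m}) by direct computation in the enveloping algebra $A^e = A\ot_k A^{\op}$, using the explicit definitions of $\beta_x(s), \beta_y(s), \gamma_x(s), \gamma_y(s), \tau_x(s), \tau_y(s)$ together with the commutation rule $yx = q^{-1}xy$ in $A$ (equivalently $xy = qyx$) and the nilpotency relations $x^a = 0 = y^a$. Since the $x$-tensor-factors and $y$-tensor-factors in all these elements occupy opposite sides of the tensor product in a compatible way, the products reduce to polynomial identities in $q$ governed by the geometric-series scalars $c_i = 1 + q + \dots + q^i$ and $K_a$-type sums; the root-of-unity hypothesis $1 + q + \dots + q^{a-1} = 0$ will be used to kill boundary terms.

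I would organize the computation by exploiting the evident $x \leftrightarrow y$ symmetry of the list: identities (\ref{rel:a}), (\ref{rel:b}), (\ref{rel:f}), (\ref{rel:h}), (\ref{rel:j}), (\ref{rel:k}), (\ref{rel:m}) concern $y$ (or mixed $x,y$) and (\ref{rel:d}), (\ref{rel:e}), (\ref{rel:g}), (\ref{rel:i}), (\ref{rel:l}) are their mirror images, so it suffices to prove roughly half of them and then apply the anti-isomorphism of $A$ swapping $x$ and $y$ (which sends $q \mapsto q^{-1}$ and exchanges the two tensor factors). For a representative case, say (\ref{rel:a}): expanding $\beta_y(1)\tau_y(1) = \bigl(\sum_{i=0}^{a-2} c_i q^i (y^i \ot y^{a-2-i})\bigr)\bigl((1\ot y) - q(y\ot 1)\bigr)$ and multiplying out in $A^e$ gives $\sum_i c_i q^i (y^i \ot y^{a-1-i}) - \sum_i c_i q^{i+1}(y^{i+1}\ot y^{a-2-i})$; reindexing the second sum and using $c_i - c_{i-1} = q^i$ together with $c_0 = 1$ and $c_{a-1}$ forced to be $0$ by the root-of-unity condition (so the top term $y^{a-1}\ot y^0$ survives with the right coefficient, and the $y^a$-terms vanish), one collapses this to $\sum_{j=0}^{a-1} q^{2j}(y^j \ot y^{a-1-j}) = \gamma_y(2)$. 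The identities (\ref{rel:c}) and the "$\beta \cdot \gamma = \gamma \cdot \beta$" type relations (\ref{rel:b}), (\ref{rel:e}), (\ref{rel:j}), (\ref{rel:m}) are handled the same way, except one must also move $x$-factors past $y$-factors inside $A$, which introduces powers of $q$ from the commutation rule; the key is that these match up exactly with the $q^{si}$ weights built into the definitions of $\beta$.

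The main obstacle I anticipate is purely bookkeeping: keeping the reindexing shifts, the $q$-powers accumulated from commuting $x$ past $y$, and the geometric-series collapses all consistent, especially in the mixed relations (\ref{rel:b}), (\ref{rel:e}), (\ref{rel:j}), (\ref{rel:m}) where both variables appear and both tensor slots are nontrivial. There is no conceptual difficulty — each identity is an equality of explicit elements of the finite-dimensional algebra $A^e$ — but the risk of sign or exponent errors is real, so I would present one case in full detail (e.g.\ (\ref{rel:a}) and one mixed case such as (\ref{rel:b})) and indicate that the remaining cases follow by the same manipulation or by the $x\leftrightarrow y$ symmetry. One should also double-check the degenerate behavior when $a = 3$, where the range $0 \leq i \leq a-2$ is very short, to make sure no identity secretly relies on $a$ being large.
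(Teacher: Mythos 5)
Your proposal matches the paper's proof in essence: the paper also verifies these identities by direct expansion in $A^e$, proving (\ref{rel:a}) via the telescoping $c_i - c_{i-1} = q^i$ together with $c_{a-2} = -q^{a-1}$ from the root-of-unity condition, and (\ref{rel:b}) by commuting $x$-powers past $y$-powers term by term, then declaring the remaining eleven relations analogous. Your additional suggestion to halve the work via the $x\leftrightarrow y$ symmetry is a reasonable organizational refinement not used in the paper (and would need care with the sign asymmetry between $\tau_x$ and $\tau_y$, visible in the minus signs of (\ref{rel:d}), (\ref{rel:g}), (\ref{rel:i}), (\ref{rel:l})), but the underlying computation is the same.
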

\begin{proof}
We prove (\ref{rel:a}) and (\ref{rel:b}), and the other relations follows from the same kind of reasoning. Start with (\ref{rel:a}), we have 
\begin{align}
\beta_y(1)\tau_y(1)&=\left(\sum_{i=0}^{a-2}c_iq^i(y^i \ot y^{a-2-i})\right)\left((1 \ot y) - q(y \ot 1)\right)\\
&=\sum_{i=0}^{a-2}\left(c_iq^i(y^i \ot y^{a-1-i})-c_iq^{i+1}(y^{i+1} \ot y^{a-2-i})\right) \\
&= c_0(1 \ot y^{a-1}) + c_1q(y \ot y^{a-2}) + \cdots + c_{a-2}q^{a-2}(y^{a-2} \ot y) \\ 
&\hphantom{=c_0(1 \ot y^{a-1}),} - c_0q(y \ot y^{a-2})-\cdots- c_{a-3}q^{a-2}(y^{a-2}\ot y)-c_{a-2}q^{a-1}(y^{a-1}\ot 1)\\
&=c_0(1 \ot y^{a-1})+q(c_1-c_0)(y \ot y^{a-2})+q^2(c_2-c_1)(y^2\ot y^{a-3})+\\ 
&\hspace{3cm} \cdots + q^{a-2}(c_{a-2}-c_{a-3})(y^{a-2}\ot y) - q^{a-1}c_{a-2}(y^{a-1}\ot1)
\end{align}
where we have that $c_0=1$, $c_1-c_0=1+q-1=q$ 
\begin{align}
c_{i+1}-c_i=(1+q+\cdots+q^{i+1})-(1+q+\cdots+q^i)=q^{i+1}
\end{align}
We also observe 
\begin{align}
c_{a-1}=1+q+\cdots+q^{a-2}=-q^{a-1}
\end{align}
since $a$ is a root of unity and hence $1+q+\cdots+q^{a-2}+q^{a-1}=0$. Then we have, 
\begin{align}
\beta_y(1)\tau_y(1)&=(1 \ot y^{a-1})+q^2(y \ot y^{a-2})+\cdots+q^{2(a-1)}(y^{a-1}\ot1)=\gamma_y(2)
\end{align}
For the relation (\ref{rel:b}) we inspect a typical element in this sum: 
\begin{align}
c_iq^{-i}(x^{a-2-i}\ot x^{i})q^{2j}(y^{j} \ot y^{a-1-j})=c_iq^{-i}q^{2j}(x^{a-2-i}y^{j} \ot x^{i}*y^{a-1-j})
\end{align}
(where $*$ denotes the multiplication in $A^{\op}$). Now we recall that $xy=qyx$ (and $x*y=q^{-1}y*x$) hence $x^{a-2-i}y^{j}=q^{j(a-2-i)}y^{j}x^{a-2-i}$ and $x^{i}*y^{a-1-j}=q^{-i(a-1-j)}y^{a-1-j}*x^{i}$. We get 
\begin{align}
c_iq^{-i}q^{2j}q^{j(a-2-i)}q^{-i(a-1-j)}(y^{j}x^{a-2-i} \ot y^{a-1-j}*x^{i})=(y^j \ot y^{a-2-j})c_j(x^{a-2-i} \ot x^i)
\end{align}
which is the most typical element in the sum $\gamma_y(0)\beta_x(0)$. 
\end{proof}

The relations (\ref{rel:a}) to (\ref{rel:m}) in Lemma \ref{lem:rel} can be used to prove that the maps $h_s$ are liftings for the given map $\xi$:  

\begin{prop}
The lifting formulas make the suggested squares commutative, that is $h_{s-1} \circ d_{2t+s} = d_s \circ h_{s}$ when $s \geq 1$ and $\xi = \mu \circ h_0$.  
\end{prop}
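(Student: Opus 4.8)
The plan is to verify the identity $h_{s-1}\circ d_{2t+s}=d_s\circ h_s$ by a direct but organized computation on each generator $f_i^{2t+s}$ of the free $A^e$-module $P_{2t+s}$, splitting into cases according to the parity of $s$ and the parity of $i$. In each case, one side expands via the formula for $d_{2t+s}$ from the simplified differentials, then applies $h_{s-1}$ (which is again a sum of terms $p_{i'-j}\,\omega_\pm(j)f_j^{s-1}$ or $p_{i'-j}\,\varepsilon_{x/y}(j)f_j^{s-1}$), while the other side applies $h_s$ first and then the differential $d_s$; both sides become $A^e$-linear combinations of the generators $f_j^{s-1}$ of $P_{s-1}$, and one compares the coefficient of each $f_j^{s-1}$. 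Since the $p_i$ are central scalars commuting with all of $A^e$, the only non-formal content is matching the $A^e$-coefficients, and this is precisely what the thirteen relations (\ref{rel:a})--(\ref{rel:m}) of Lemma \ref{lem:rel} are designed to supply.

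Concretely, I would organize the four cases ($s$ even with $i$ even, $s$ even with $i$ odd, $s$ odd with $i$ even, $s$ odd with $i$ odd) and in each case write out the coefficient of $f_j^{s-1}$ on both sides. For instance, when $s$ is even and $i$ is even, the left-hand side $h_{s-1}\circ d_{2t+s}(f_i^{2t+s})$ involves $\gamma_y(0)h_{s-1}(f_i^{2t+s-1})+\gamma_x(0)h_{s-1}(f_{i-1}^{2t+s-1})$; here $s-1$ is odd, so $h_{s-1}$ uses the $\varepsilon_x,\varepsilon_y$ formulae, and the coefficient of a fixed $f_j^{s-1}$ collects terms $\gamma_y(0)\varepsilon_x(j)$ or $\gamma_x(0)\varepsilon_y(j)$ against various $p$'s. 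The right-hand side $d_s\circ h_s(f_i^{2t+s})$ expands $h_s$ with the $\omega_\pm$ formulae and then applies the even differential $d_s$, giving coefficients involving $\gamma_y(0)\omega_+(j)$, $\gamma_x(0)\omega_+(j)$, $-\tau_y(1)\omega_-(j)$, $\tau_x(1)\omega_-(j)$ with appropriate index shifts. Matching the two, after reindexing $j\mapsto j\pm1$ in one of the sums so that the same $p_{i-j}$ appears, reduces to the relations: e.g.\ a coefficient comparison will demand $\beta_x(-1)\gamma_y(2)=\gamma_y(0)\beta_x(0)$, which is (\ref{rel:b}), or $\beta_y(1)\tau_y(1)=\gamma_y(2)$, which is (\ref{rel:a}), and so on through the list. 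I would tabulate which relation is used in which case, and also separately check the base condition $\mu\circ h_0=\xi$, which is immediate from $h_0(f_i^{2t})=p_if_0^0$ and $\mu(f_0^0)=1_A$, together with the very first square $h_0\circ d_{2t+1}=d_1\circ h_1$.

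The main obstacle, and the only genuinely delicate point, is bookkeeping: keeping track of the index shifts (the $i-1$ versus $i$ in the differentials, the $j$ versus $j-1$ in $h_s$, and the parity of $j$ which switches $\omega_+$ and $\varepsilon$ between $1$ and the $\beta$-terms), and making sure the boundary conventions $f_{-1}^n=f_{n+1}^n=0$ and $p_i=0$ for $i\notin[0,2t]$ are consistently applied so that no spurious terms survive at the ends of the sums. Once the correspondence between cases and relations is set up, each individual verification is a short algebraic manipulation in $A^e$ using $xy=qyx$, the identity $1+q+\cdots+q^{a-1}=0$, and the elementary facts $c_0=1$, $c_{i+1}-c_i=q^{i+1}$, $c_{a-1}=-q^{a-1}$ already recorded in the proof of Lemma \ref{lem:rel}. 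I would present one representative case in full (say $s$ even, $i$ even) and indicate that the remaining three are entirely analogous, citing the corresponding relations from Lemma \ref{lem:rel} in each.
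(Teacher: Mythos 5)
Your proposal is correct and follows essentially the same route as the paper: apply both sides to each generator $f_i^{2t+s}$, rewrite as $A^e$-combinations of the $f_j^{s-1}$ after reindexing, and match coefficients using the relations of Lemma \ref{lem:rel}, with the case $s$, $i$ even worked in detail (via (\ref{rel:a})--(\ref{rel:e})) and the remaining parities treated analogously. No substantive difference from the paper's argument.
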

\begin{proof}
We give details when $s$ and $i$ are even, the other cases are similar. The strategy is to apply both sides to $f_i^{2t+s}$ and express the answer in terms of the basis $\{ f_j^{s-1}\}$, with coefficients in $A^e$ and then show that the coefficients of the $f_j^{s-1}$ in the two expressions are equal.

We have
\begin{align}
(d_s\circ h_s)(f_i^{2t+s}) = &d_s \circ (\sum_{j=0}^s p_{i-j}\omega_+(j)f_j^s)\\
=& \sum_{j\text{ even, } 0\leq j\leq s} p_{i-j}\omega_+(j)[\gamma_y(0)f_j^{s-1} + \gamma_x(0)f_{j-1}^{s-1}]\\
&\hspace{.1cm} +\sum_{j\text{ odd, }0\leq j\leq s} p_{i-j}\omega_+(j)[-\tau_y(1)f_j^{s-1} + \tau_x(1)f_{j-1}^{s-1}]
\end{align}

We split each of the two sums, and when the index is $j-1$ we change variables, setting $l=j-1$ so that $j=l+1$ and noting that $l$ has opposite parity as $j$. As well we set $\omega_+(j)=1$ for $j$ even. Then this becomes
\begin{align}
= & \sum_{j \text{ even, }0\leq j\leq s} p_{i-j}\gamma_y(0)f_j^{s-1}
+   \sum_{l \text{ odd , }-1\leq l\leq s-1}p_{i-l-1}\gamma_x(0)f_l^{s-1}\\
+ & \sum_{j \text{ odd , }0\leq j\leq s} -p_{i-j}\omega_+(j)\tau_y(1)f_j^{s-1}
+   \sum_{l \text{ even, }-1\leq l\leq s-1} p_{i-l-1}\omega_+(l+1)\tau_x(1)f_l^{s-1}
\end{align}
The range of summation can be unified since $f_j^s=0$ for $j=-1$ or $j=s$. We write this now as a combination in the $A^e$-basis $f_j^{s-1}$ for $0\leq j\leq s-1$, (writing $j$ for $l$) and we get 
\begin{align} 
 = & \sum_{j \text{ even, }0\leq j\leq s-1} [p_{i-j}\gamma_y(0) + p_{i-j-1}\omega_+(j+1)\tau_x(1)] f_j^{s-1} \nonumber\\
 +& \sum_{j \text{ odd, }  0\leq j\leq s-1} [p_{i-j-1}\gamma_x(0) - p_{i-j}\omega_+(j)\tau_y(1)]f_j^{s-1}\label{eq:lowerend}
\end{align}

On the other hand
\begin{align}
(h_{s-1}\circ d_{2t+s})(f_i^{2t+s}) = &
h_{s-1} \circ (\gamma_y(0)f_i^{2t+s-1} + \gamma_x(0)f_{i-1}^{2t+s-1})\\
=& \gamma_y(0)[\sum_{j=0}^{s-1}p_{i-j}\varepsilon_x(j)f_j^{s-1}] + \gamma_x(0)[\sum_{j=0}^{s-1} p_{i-1-j}\varepsilon_y(j)f_j^{s-1}]\\\
=& \sum_{j=0}^{s-1} 
[p_{i-j}\gamma_y(0)\varepsilon_x(j) + p_{i-1-j}\gamma_x(0)\varepsilon_y(j)]f_j^{s-1}\label{eq:upperend}
\end{align}

We must show that for each $j$ the coefficients of $f_j^{s-1}$ in (\ref{eq:lowerend}) and in (\ref{eq:upperend}) are equal.

\begin{itemize}
\item[(a)] Assume first $j$ is even. We require
\begin{align}
p_{i-j}\gamma_y(0) + p_{i-j-1}\omega_+(j+1)\tau_x(1) 
= p_{i-1}\gamma_y(0)\varepsilon_x(j) + p_{i-j-1}\gamma_x(0)\varepsilon_y(j)
\end{align}
For $j$ even, $\varepsilon_x(j)=1$ and the first terms agree. The second terms agree provided
\begin{align}
\omega_+(j+1)\tau_x(1) = \gamma_x(0)\varepsilon_y(j) 
\end{align}
Consider the LHS, by identities (\ref{rel:c}), (\ref{rel:d}) and (\ref{rel:e}) it is equal to
\begin{align}
\beta_y(-1)\beta_x(1)\tau_x(1) = -\beta_y(-1)\gamma_x(2) = -\gamma_x(0)\beta_y(0) = \gamma_x(0)\varepsilon_y(j)
\end{align}
from the definition of $\varepsilon_y(j)$ in this case. Hence the second terms agree as well.

\item[(b)] Now assume $j$ is odd. We require 
\begin{align}
p_{i-j-1}\gamma_x(0) - p_{i-j}\omega_+(j)\tau_y(1) = p_{i-j}\gamma_y(0)\varepsilon_x(j) + p_{i-1-j}\gamma_x(0)\varepsilon_y(j)
\end{align}
For $j$ odd, $\varepsilon_y(j)=1$ and the terms with $p_{i-j-1}$ agree. For the other two terms to agree we need 
\begin{align}
\gamma_y(0)\varepsilon_x(j) = -\omega_+(j)\tau_y(1)
\end{align}
We have using the definition and identities (\ref{rel:a}) and (\ref{rel:b}) that 
\begin{align}
-\omega_+(j)\tau_y(1)  
= -\beta_x(-1)\beta_y(1)\tau_y(1) = -\beta_x(-1)\gamma_y(2) = -\gamma_y(0)\beta_x(0)
= \gamma_y(0)\varepsilon_x(j)
\end{align}
as required.
\end{itemize}
\end{proof}
Similar as for the case $a=2$ we define the Yoneda product of the residue classes represented by $\xi$ and $\chi$ to be the residue class represented by the composition 
\begin{align}
\chi \bullet \xi = \chi \circ h_s. 
\end{align}

\subsection{Description of Yoneda products of basis elements when $a\geq3$}\label{subsec:descoflargeyoneda} 


In the definition \ref{a>2:hs} of the lifting maps,
we have the term $\omega_+(j)= \beta_x(-1)\beta_y(1) \in A^e$ (for $j$ odd).
When this is evaluated  in $A$, it becomes $\omega_+(j)\cdot 1_A$. We claim that this is always zero, in fact $\beta_y(1)\cdot 1_A=0$. 

Namely, we must view $A$ as an $A^e$ bimodule and then 
\begin{align}
\beta_y(1)\cdot 1_A = \sum_{i=0}^{a-2} c_iq^iy^{a-2} = (\sum_{i=0}^{a-2}c_iq^i)y^{a-2}
\end{align}
The following shows that this is zero: 

\begin{lem} 
Let $q$ be a primitive $a$-th root of unity for $a\geq 3$. Let $c_i=1 + q + \ldots + q^i$ for $i\geq 0$, then 
\begin{align}
\sum_{i=0}^{a-2} c_iq^i = 0
\end{align}
\end{lem}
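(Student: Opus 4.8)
The plan is to compute the sum $\sum_{i=0}^{a-2} c_i q^i$ directly, using the fact that $c_i = 1 + q + \cdots + q^i = \frac{1-q^{i+1}}{1-q}$, which is valid since $q$ is a primitive $a$-th root of unity and hence $q \neq 1$. Substituting this in, the sum becomes $\frac{1}{1-q}\sum_{i=0}^{a-2}(1-q^{i+1})q^i = \frac{1}{1-q}\left(\sum_{i=0}^{a-2}q^i - \sum_{i=0}^{a-2}q^{2i+1}\right)$.

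The first of these two geometric sums, $\sum_{i=0}^{a-2}q^i$, equals $\frac{1-q^{a-1}}{1-q}$, and since $q^a = 1$ this is $\frac{1-q^{-1}}{1-q} = \frac{-q^{-1}(1-q)}{1-q} = -q^{-1}$; equivalently, since $1 + q + \cdots + q^{a-1} = 0$, it is simply $-q^{a-1}$. For the second sum, $\sum_{i=0}^{a-2}q^{2i+1} = q\sum_{i=0}^{a-2}q^{2i} = q\sum_{i=0}^{a-2}(q^2)^i$, and I will evaluate $\sum_{i=0}^{a-2}(q^2)^i$ separately. Here one must be slightly careful: if $a$ is odd then $q^2$ is still a primitive $a$-th root of unity, while if $a$ is even then $q^2$ is a primitive $(a/2)$-th root of unity; in either case $q^2 \neq 1$ (as $a \geq 3$), so the geometric series formula applies and $\sum_{i=0}^{a-2}(q^2)^i = \frac{1-q^{2(a-1)}}{1-q^2} = \frac{1-q^{-2}}{1-q^2} = -q^{-2}$, using $q^{2a}=1$.

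Putting these together, the bracket becomes $-q^{-1} - q\cdot(-q^{-2}) = -q^{-1} + q^{-1} = 0$, and therefore $\sum_{i=0}^{a-2}c_iq^i = \frac{1}{1-q}\cdot 0 = 0$, as claimed. An alternative, slightly slicker route avoiding the case distinction on the parity of $a$: observe that $-q^{-1} = -q^{a-1}$ and $-q^{-2} = -q^{a-2}$, so the identity $\sum_{i=0}^{a-2}q^i = -q^{a-1}$ and $\sum_{i=0}^{a-2}q^{2i} = -q^{a-2}$ both follow at once from $1 + q + \cdots + q^{a-1} = 0$ together with $q^a = 1$ — for the second, note $\sum_{i=0}^{a-2}q^{2i} = q^{-2}\sum_{i=1}^{a-1}q^{2i}$ has terms $q^2, q^4, \ldots$ which, reduced mod $q^a=1$, run over a set that sums to $-1$ when combined appropriately; to keep things clean I would just factor $1-q^2 = (1-q)(1+q)$ and use $\sum_{i=0}^{a-2}q^{2i} = \frac{1-q^{2a-2}}{1-q^2}$.

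I do not expect any genuine obstacle here; this is an elementary manipulation of roots of unity. The only point requiring a moment's attention is ensuring $1 - q^2 \neq 0$, i.e.\ $q^2 \neq 1$: since $q$ is a \emph{primitive} $a$-th root of unity with $a \geq 3$, we have $q \neq \pm 1$, so this is fine. One could also give a completely computation-free proof by recognizing $\sum_{i=0}^{a-2} c_i q^i$ as, up to the factor $(1-q)^{-1}$, a difference of two full-period-minus-one geometric sums each of which vanishes up to an explicit unit, but the direct computation above is the cleanest to write out.
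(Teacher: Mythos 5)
Your proof is correct. You write $c_i=\frac{1-q^{i+1}}{1-q}$ and reduce the sum to a difference of two geometric series, $\sum_{i=0}^{a-2}q^i$ and $q\sum_{i=0}^{a-2}(q^2)^i$, each of which you evaluate to $-q^{-1}$ using $q^a=1$; the required nonvanishing of $1-q$ and $1-q^2$ is correctly justified from $q$ being a \emph{primitive} $a$-th root of unity with $a\geq 3$, so $q\neq\pm 1$. The paper takes a genuinely different route: it sets $c_{-1}=0$, observes $q^i=c_i-c_{i-1}$, and computes
\begin{align}
(1+q)\sum_{i=0}^{a-2}c_iq^i=\sum_{i=0}^{a-2}\bigl(c_ic_{i+1}-c_ic_{i-1}\bigr)=c_{a-2}c_{a-1}-c_0c_{-1}=0,
\end{align}
a telescoping sum that vanishes because $c_{a-1}=0$ and $c_{-1}=0$, after which one cancels $(1+q)$ using $q\neq -1$. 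The two arguments use the same hypotheses in the end ($q$ primitive, $a\geq 3$). Yours is the more routine computation and makes the dependence on $q^2\neq 1$ explicit through the second geometric series; the paper's telescoping identity is slicker and avoids ever writing $c_i$ in closed form, but hides the same constraint in the division by $1+q$. Your ``alternative slicker route'' at the end is the only shaky part --- the claim that $\sum_{i=0}^{a-2}q^{2i}=-q^{a-2}$ ``follows at once'' from $1+q+\cdots+q^{a-1}=0$ glosses over the case of even $a$, where the powers $q^{2i}$ do not run over all $a$-th roots of unity --- but since you explicitly fall back on the geometric-series evaluation there, the main argument stands.
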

\begin{proof}
Set also $c_{-1} := 0$. Then we have for $i\geq 0$ that $c_i-c_{i-1} = q^i$.  We get 
\begin{align}
\sum_{i=0}^{a-2} c_iq^i =  \sum_i c_i(c_i- c_{i-1})
\end{align}
Therefore (all summations from $i=0$ to $a-2$)
\begin{align*}
(1+q)(\sum_i c_iq^i) = & \sum_{i} c_iq^i + \sum_i c_iq^{i+1}\cr
=& \sum_i c_i(c_i-c_{i-1}) + \sum_i c_i(c_{i+1}-c_i) \cr
=& \sum_{i} (c_ic_{i+1} - c_ic_{i-1}) \cr
=& c_{a-2}c_{a-1} - c_0c_{-1} \cr
=& 0
\end{align*}
since $c_{a-1} = 1 + q + \ldots + q^{a-1}=0$ and $c_{-1}=0$. But $q\neq -1$, so we can cancel by $(1+q)$ and get the claim.
\end{proof}

We analyse now the products, and this will complete the proof of
Theorem \ref{R:a>2}.  
Define 
\begin{align}
R = {\rm Sp}\{ \zeta_i^{2t}: t\geq 0, 0\leq i\leq 2t\}.
\end{align}
We compute products of elements in $R$. 

Let $\chi$ be of degree $2m$ and $\xi$ of degree $2t$, both in $R$. Let $\xi(f_i^{2t}) = p_i\in K$ for $0\leq i\leq 2t$ and $\chi_(f_j^{2m}) = \bar{p}_j \in K$ for $0\leq j\leq 2m$. As before we set $p_i=0$ for $0<i$ of $i>2t$, and similarly we define $\bar{p}_j$ for any $j\in \mathbb{Z}$. 

Then $\chi\bullet \xi$ is the class of $\chi\circ h_{2m}$ where $(h_s)$ is a lifting of $\xi$, where we use the formula computed above. Note that we only need the case when $s=2m$ is even. We have 
\begin{align}
\chi\circ h_s(f_i^{2t+s}) = \chi(\sum_{j=0}^s p_{i-j}\omega_+(j)f_j^s)
= \left\{\begin{array}{ll}
\sum_{j=0}^s p_{i-j}\omega_+(j)\bar{p}_j & i \mbox{ \ even} \cr
\sum_{j=0}^s p_{i-j}\bar{p}_j & i \ \mbox{ odd}
\end{array}
\right.
\end{align}
where we have already used that $\omega_-(j)=1$. 

Now assume $\chi = \zeta_l$ for some $0\leq l\leq s$, so $\bar{p}_l=1$ and $p_j=0$ otherwise. Then the above simplifies to 
\begin{align}
f_i^{2t+s} \mapsto \left\{\begin{array}{ll} p_{i-l}\omega_+(l)\cdot 1 & i \mbox{ even} \cr
p_{i-l}\cdot 1  & i \mbox{ odd}
\end{array}
\right.
\end{align}

Now take $\xi = \zeta_r$ for some $0 \leq r \leq 2t$. Then $p_{i-l}=1$ if $i-l=r$, and $=0$ otherwise. 

Note that $\omega_+(l) \cdot 1_A=0$ for $l$ odd and $=1$ otherwise. The zero occurs precisely when $l$ is odd and $i=l+r$ is even, i.e.\ if both $l, r$ are odd. So we get 
\begin{align}
\zeta_l^{2m}\cdot \zeta_{r}^{2t} = \left\{ \begin{array}{ll} \zeta_{l+r}^{2m+2t} &  l, r \  \mbox{  not both odd }\cr
0 &  l, r \ \mbox{ odd}.
\end{array}
\right.
\end{align}

As for the case $a=2$ we see that $R_0$ is isomorphic to the
polynomial ring in two variables. 

Furthermore, we see that
elements in $R_1$ are also nilpotent. The subalgebra
$R_0$ intersects the largest homogeneous nilpotent
ideal $\cN$ trivially, and the span of the $\eta^{\pm}$ is contained in 
$\cN$.

\vspace{\baselineskip}
\noindent\textbf{Acknowledgements.}  
Both authors thank Petter A.\ Bergh for his joint notes with Karin Erdmann. The second author (Magnus Hellstrøm-Finnsen) would like to thank the first author (Karin Erdmann) a lot for the invitation to Oxford and is very grateful for spending these months on this wonderful place. Thanks for valuable discussions on this project and on various topics in general, and for including me into the mathematical community in Oxford. Thanks also to Petter for arranging and help with applications. The Research Council of Norway supports the PhD work of the second author through the research project \emph{Triangulated categories in algebra} (NFR 221893). The Research Council of Norway has also supported the second author with oversea grant in the occasion of the stay in Oxford. 


\printbibliography
\mbox{}\vfill\mbox{}
\end{document}